\newtheorem{dfn}{Definition}[section]
\newtheorem{thm}[dfn]{Theorem}
\newtheorem{prp}[dfn]{Proposition}
\newtheorem{lem}[dfn]{Lemma}
\newtheorem{cor}[dfn]{Corollary}
\newtheorem{Alphathm}{Theorem}
\theoremstyle{definition}
\newtheorem{rem}[dfn]{Remark}
\newtheorem{exl}[dfn]{Example}
\theoremstyle{plain}
\def\co{\colon\thinspace}
\def\R{{\mathbb R}}
\def\C{{\mathbb C}}
\def\Z{{\mathbb Z}}
\def\Sph{{\mathbb S}}
\def\Disk{{\mathbb D}}
\def\tr{\mathrm {tr}}
\def\id{\mathrm {id}}
\def\dev{\mathrm{dev}}
\def\sl{\mathrm{sl}}
\def\Ad{\mathrm{Ad}}
\def\H{{\mathbb H}}
\def\dS{{\mathrm d}{\mathbb{S}}}
\def\phi{\varphi}
\def\epsilon{\varepsilon}
\def\cV{{\mathcal V}}
\def\cM{{\mathcal M}}
\def\dist{\mathrm{dist\,}}
\def\Iso{\operatorname{Iso}}
\DeclareMathOperator{\arctanh}{arctanh}
\title[Infinitesimally flexible cone-manifolds]{Examples of infinitesimally flexible 3--dimensional hyperbolic cone-manifolds}
\author{Ivan Izmestiev}
\thanks{Supported by the DFG Research Unit 565 ``Polyhedral Surfaces''}
\date{September 25, 2009}
\begin{document}
\maketitle

\begin{abstract}
Weiss and, independently, Mazzeo and Montcouquiol recently proved that a 3--dimensional hyperbolic cone-manifold (possibly with vertices) with all cone angles less than $2\pi$ is infinitesimally rigid. On the other hand, Casson provided 1998 an example of an infinitesimally flexible cone-manifold with some of the cone angles larger than $2\pi$.

In this paper several new examples of infinitesimally flexible cone-manifolds are constructed. The basic idea is that the double of an infinitesimally flexible polyhedron is an infinitesimally flexible cone-manifold. With some additional effort, we are able to construct infinitesimally flexible cone-manifolds without vertices and with all cone angles larger than $2\pi$.
\end{abstract}

\section{Introduction}
\subsection{Hyperbolic cone-manifolds}
\label{subsec:ConeMan}
A hyperbolic cone-manifold $M$ is a manifold with metric structure that is hyperbolic away from a codimension 2 subcomplex $\Sigma$ and exhibits cone-like singularities at the points of $\Sigma$. In this paper, only 3--dimensional hyperbolic cone-manifolds are studied. Thus, the singular locus $\Sigma$ is a graph.

By definition, at a point $x \in \Sigma$, a cone-manifold is locally isometric to a hyperbolic cone over a spherical cone-surface $L_x$ homeomorphic to the sphere, see \cite{BLP05}. The cone-surface $L_x$ is called the link of $x$. If $L_x$ has exactly two cone-points, then $x$ is said to lie on an edge of $\Sigma$; if $L_x$ has more than two cone-points, then $x$ is called a vertex of $\Sigma$.

Sometimes, when speaking about cone-manifolds, one means only those whose singular locus is a 2-codimensional submanifold, that is a disjoint union of closed curves in our case. We will use the term \emph{cone-manifolds without vertices} to describe this situation; general cone-manifolds will be sometimes called \emph{cone-manifolds with vertices}.

An alternative way of viewing cone manifolds is as follows. Take a collection of hyperbolic polyhedra and glue them isometrically face-to-face so that the resulting space $K$ is a homeomorphic to a 3--manifold. Then $K$ is clearly a hyperbolic cone-manifold. All examples of cone-manifolds in this paper will be of this kind.

Conversely, it is plausible that every cone-manifold has a geodesic triangulation (i.~e. can be glued from simplices). But we have not found a proof of this in the literature.

\subsection{Rigidity theorems for cone-manifolds}
\label{subsec:RigThms}
Compact hyperbolic manifolds of dimension 3 and higher are known to be globally rigid (Mostow rigidity theorem) and infinitesimally rigid (Calabi-Weil rigidity theorem). By constrast, it is easy to find a smooth family of hyperbolic cone-manifolds, say, by changing the shapes of polyhedra in the last paragraph of Section~\ref{subsec:ConeMan}. As a consequence, cone-manifolds are neither globally nor infinitesimally rigid in regard to deformations that preserve only the topology of the manifold and of its singular locus.

One can reduce degrees of freedom by requiring all cone angles at the singular locus to be constant during the deformation. This leads to remarkable rigidity results. Hodgson and Kerckhoff \cite{HK98} showed that, under this restriction, 3--dimensional hyperbolic cone-manifolds without vertices are infinitesimally rigid if all cone angles are less than $2\pi$. Weiss \cite{Wei05} proved the same by allowing vertices and restricting all cone angles to be less or equal $\pi$. And recently, Weiss \cite{Wei09} and Mazzeo and Montcouquiol \cite{MM09} extended this to cone-manifolds with vertices and cone-angles less than $2\pi$.

Infinitesimal rigidity can be used to prove local rigidity, which means that cone-manifolds (with restrictions on the singular locus mentioned above) are locally parametrized by their cone angles, \cite{HK98, Mont09, Wei09}.
% Hodgson and Kerckhoff \cite{HK98} also proved that, in the absence of vertices and with all cone angles less than $2\pi$, cone-manifolds are locally parametrized by their cone angles, and all nearby values of cone angles can be achieved by a small deformation.
Hodgson and Kerckhoff \cite{HK98} suggested how global rigidity can be derived from local rigidity, if one is able to extend ``small'' deformations to a ``big'' one that makes all cone angles vanish at the end. Kojima \cite{Koj98a} accomplished this, under assumption that all cone angles are less or equal $\pi$ and that there are no vertices. This was extended by Weiss \cite{Wei07}, to include the case of cone-manifold with vertices. It is an open question whether cone-manifolds with cone angles less than $2\pi$ are globally rigid.

Without any restrictions on the values of cone angles, Hodgson and Kerckhoff \cite{HK08} proved infinitesimal and local rigidity for cone-manifolds without vertices provided that the singular locus has a tubular neighborhood of radius at least $\arctanh \frac{1}{\sqrt{3}}$.

Shortly after the first rigidity result appeared, Casson \cite{Cas98} presented an example of an \emph{infinitesimally flexible} cone-manifold with vertices and with some cone angles bigger than $2\pi$. This coexistence of rigidity theorems and flexibility examples raised the question where the border between them lies. For example, it was asked if all cone-manifolds with cone angles larger than $2\pi$ are infinitesimally rigid.

\subsection{Results of the present paper}
In this paper we present a series of examples of infinitesimally flexible cone-manifolds, inferring that the result of Weiss \cite{Wei09} and Mazzeo and Montcouquiol \cite{MM09} is the best possible, in some sense. Besides, we show that global rigidity fails if cone angles larger that $2\pi$ are allowed.

The main idea is that doubling an infinitesimally flexible polyhedron $P$ produces an infinitesimally flexible cone-manifold. An infinitesimal deformation of the double is constructed by choosing an infinitesimal isometric deformation on one copy of $P$ and the opposite deformation on the other copy. As the variations of dihedral angles cancel each other, the cone angles of the double are stable.

In order to construct examples of infinitesimally flexible hyperbolic polyhedra, we use an elegant theorem of Pogorelov: a hyperbolic polyhedron is infinitesimally flexible iff its image in a Klein model is infinitesimally flexible as a Euclidean polyhedron. Several examples of infinitesimally flexible Euclidean polyhedra are known, the simplest ones being combinatorially isomorphic to the octahedron. This leads to the following theorem, see Section~\ref{sec:WithVert}.

\begin{Alphathm}
\label{thm:1}
There exists a compact infinitesimally flexible hyperbolic cone-mani\-fold homeomorphic to the ball, with the singular locus the skeleton of an octahedron.
\end{Alphathm}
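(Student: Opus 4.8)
The plan is to combine the projective criterion of Pogorelov quoted above with the doubling idea from the introduction, starting from a classical flexible octahedron. First I would fix an embedded Euclidean polyhedron $Q\subset\R^3$ combinatorially isomorphic to the octahedron that is infinitesimally flexible; such polyhedra are classical, and concretely one places the six vertices in a suitable (non-generic) position and checks that the linear system expressing stationarity of the twelve edge lengths has a solution which is not the restriction of an element of the six-dimensional Lie algebra of Euclidean isometries. Rescaling $Q$ so that it lies in the open unit ball and reading that ball as the Klein model of $\H^3$ turns straight segments and planar polygons into hyperbolic geodesic segments and totally geodesic polygons, so $Q$ becomes a hyperbolic polyhedron $P$ with totally geodesic faces; by Pogorelov's theorem $P$ is infinitesimally flexible as a hyperbolic polyhedron, i.e. there is an assignment $\dot v$ of velocity vectors to the vertices of $P$ keeping every edge length stationary to first order that is not induced by a Killing field of $\H^3$. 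Since every face of $P$ is a triangle, stationarity of the edge lengths makes the first-order variation of the intrinsic metric of each face, hence of the intrinsic metric of $\partial P$, vanish.

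Next I would form the double $M$ of $P$: two copies $P_1,P_2$ glued along the identity map of $\partial P$. Because the faces are totally geodesic, gluing two half-balls along a face produces a smooth hyperbolic neighbourhood, so $M$ is a hyperbolic cone-manifold whose singular locus $\Sigma$ is the common image of the $1$--skeleton of $P_1$ and $P_2$, i.e. the skeleton of an octahedron, with cone angle $2\alpha$ along an edge where $P$ has dihedral angle $\alpha$ and with a genuine vertex at each of the six vertices of $P$. Topologically $M\cong\Sph^3$, so excising an open metric ball disjoint from $\Sigma$ gives a compact cone-manifold homeomorphic to the $3$-ball with the same singular locus, as required. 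The infinitesimal deformation is obtained by deforming $P_1$ by $\dot v$ and $P_2$ by $-\dot v$: by the last remark of the previous paragraph both prescriptions induce the zero first-order variation of the intrinsic metric of $\partial P$, so they agree there and patch to an infinitesimal deformation of the cone structure of $M$; and at every edge the first-order variation of the cone angle is $\dot\alpha+(-\dot\alpha)=0$, so all cone angles are preserved.

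The crux, and the step I expect to be the main obstacle, is to show that this deformation is \emph{nontrivial}, i.e. not induced by an infinitesimal ambient diffeomorphism of $M$; otherwise $M$ would after all be infinitesimally rigid and nothing would have been gained. The natural tool is the isometric involution $\tau$ of $M$ exchanging $P_1$ and $P_2$ and fixing $\partial P$ pointwise: the constructed deformation is $\tau$-anti-invariant, so were it trivial one could average the generating vector field to one that is $\tau$-anti-invariant, hence tangentially vanishing along $\partial P$ and in fact identically zero along the edges of $\Sigma$ (a nonzero vector cannot be normal to both geodesic faces meeting at an edge); tracing the effect of such a field on $P_1$ then forces $\dot v$ to be a flex of a very restricted kind, incompatible with $\dot v$ actually moving the dihedral angles of $P$ — a property which in the end comes from the flex of $Q$ being non-isometric in $\R^3$. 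Alternatively one works on the holonomy side, using that the extra loops supplied by the $1$--skeleton of the octahedron in $M\setminus\Sigma$ should carry the deformation cocycle nontrivially in $H^1$. Turning either sketch into a rigorous argument — that is, pinning down the precise dictionary between polyhedral flexes of $P$ and infinitesimal deformations of the cone structure of $M$ — is where the real work lies; the combinatorics of the octahedron and the angle cancellation are then only bookkeeping.
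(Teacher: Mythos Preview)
Your approach is the paper's approach: take Sch\"onhardt's infinitesimally flexible Euclidean octahedron, pass to a hyperbolic polyhedron $P$ via the Klein model and Pogorelov's theorem, double $P$, and deform the two halves by $\pm\dot v$. Two remarks. First, the excision step is superfluous and slightly awkward: the paper's double $M$ is itself the example (it is homeomorphic to $\Sph^3$; the word ``ball'' in the statement is a slip, cf.\ Section~\ref{subsec:FlexDouble}), and cutting out a smooth ball introduces a boundary that the theorem does not mention. Second, your nontriviality argument is more laboured than needed. The paper observes that a nontrivial infinitesimal isometric deformation of $P$ must vary some dihedral angle (edge lengths plus dihedral angles determine $P$ up to congruence), hence the spherical link $L_x$ at some vertex $x$ of $M$ is deformed nontrivially; that alone certifies the cone-manifold deformation is nontrivial, without invoking $\tau$-equivariance or holonomy cocycles.
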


A Euclidean polyhedron $P$ whose vertices lie outside the ball of the Klein model can be viewed as a non-compact hyperbolic polyhedron $P^\H$. By truncating the infinite ends, we obtain a compact polyhedron $P^\H_\tr$. If $P$ is infinitesimally flexible, then an analog of Pogorelov's theorem implies that $P^\H_\tr$ is infinitesimally flexible in the class of truncated hyperideal polyhedra. By gluing four copies of $P^\H_\tr$ together, we obtain a cone-manifold without vertices that is again infinitesimally flexible. If the gluing is done in a ``doubling and redoubling'' fashion, then each edge of $P$ gives rise to a component of the singular locus of the manifold. By modifying the gluing scheme, we can reduce the number of components.

\begin{Alphathm}
\label{thm:2}
There exist a compact non-orientable infinitesimally flexible hyperbolic cone-manifold without vertices whose singular locus has three components and a compact orientable infinitesimally flexible hyperbolic cone-manifold without vertices whose singular locus has four components. In both cases, exactly one of the components has cone angle larger than $2\pi$.
\end{Alphathm}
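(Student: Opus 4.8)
The plan is to begin with a Euclidean polyhedron $P$ that is infinitesimally flexible and \emph{hyperideal}, meaning that all of its vertices lie strictly outside the ball of the Klein model; an infinitesimally flexible octahedron, such as the one underlying Theorem~\ref{thm:1}, can be placed in such a position. Reading $P$ in the Klein model produces a hyperbolic polyhedron $P^\H$ with infinite (hyperideal) ends, and truncating these ends along the polar planes of the vertices of $P$ yields a compact polyhedron $P^\H_\tr$ whose boundary consists of \emph{lateral faces} (the truncated faces of $P$) and \emph{truncation faces} (one per vertex of $P$), with every truncation face meeting each adjacent lateral face at a right angle. By the analogue of Pogorelov's theorem recalled in the introduction, the infinitesimal flexibility of $P$ passes to $P^\H_\tr$, now understood as flexibility within the class of truncated hyperideal polyhedra, i.e.\ through deformations that keep all truncation faces orthogonal to the lateral faces they meet.

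Next I would build the cone-manifold $M$ by gluing four isometric copies $P_1,\dots,P_4$ of $P^\H_\tr$ face-to-face according to a fixed combinatorial scheme: lateral faces to lateral faces (via symmetries of the combinatorial type of $P$) and truncation faces to truncation faces. Since $P^\H_\tr$ is a ball and the gluing is face-to-face, $M$ is a closed hyperbolic cone-manifold whose singular locus $\Sigma$ is swept out by the edges of the $P_k$. The key point is that $M$ has \emph{no vertices}: the vertices of $P$ have been truncated away, and each new vertex $w$ — a corner of some truncation face lying on two lateral faces — has, because of the two right angles at $w$ between the truncation face and those lateral faces, a link in $M$ that is assembled from four spherical triangles into a sphere with exactly two (equal) cone points; thus $w$ lies on an edge of $\Sigma$ and is not a vertex of it. Following $\Sigma$ through its new vertices, the components of $\Sigma$ correspond to the orbits of edges of $P$ under the identifications induced by the gluing, and the cone angle along a component equals the sum of the dihedral angles of $P^\H_\tr$ over the copies arranged cyclically around a representative lateral edge.

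To obtain an infinitesimal flex of $M$, I would fix a nontrivial infinitesimal flex of $P^\H_\tr$ among truncated hyperideal polyhedra, install it on two of the copies and its opposite on the other two, taking the four copies to be alternately mirror images so that the flexes match on every glued lateral face. One then checks that around each edge of $\Sigma$ an even number of copies occur and that consecutive ones carry opposite signs, so the first variations of the dihedral angles cancel in pairs and all cone angles of $M$ are stationary; since the flex is already nontrivial on a single copy, it is not induced by an isometry of $M$, so $M$ is infinitesimally flexible. It then remains to produce two explicit gluing schemes of the four copies — a non-orientable one in which the edges of $P$ fall into three orbits, and an orientable one in which they fall into four — each arranged so that precisely one orbit has its cyclic sum of dihedral angles larger than $2\pi$ while every other component has cone angle below $2\pi$; this imposes a mild constraint on the dihedral angles of $P^\H_\tr$, which one arranges at the outset when positioning $P$.

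The main obstacle is precisely this last piece of combinatorial design: exhibiting an identification pattern of the four copies of $P^\H_\tr$ for which, simultaneously, (a) the quotient is a manifold, (b) every edge of $\Sigma$ is surrounded by an even number of copies with alternating orientations, so that the flex descends, and (c) the edge-orbits, and the dihedral-angle sums along them, are controlled tightly enough to fix the number of components of $\Sigma$ and to force exactly one cone angle above $2\pi$; the non-orientable case with three components is the most delicate. Subsidiary technical points are checking that the chosen base octahedron admits a hyperideal Euclidean realization with dihedral angles in the ranges dictated by (c), and that the resulting deformation of $M$ is genuinely nontrivial rather than an artefact of the gluing.
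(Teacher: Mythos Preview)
Your proposal is correct and follows essentially the same approach as the paper: truncate a hyperideal Sch\"onhardt octahedron, assemble four copies with $\pm q$ on alternating pieces so that dihedral-angle variations cancel, and then choose the gluing of truncation faces to control the number of singular components. The paper resolves what you call the ``main obstacle'' concretely by first doubling $P^\H_\tr$ along its lateral (old) faces to get a cone-manifold $D$ with totally geodesic boundary, and then exploiting the antipodal symmetry $A\leftrightarrow A'$, $B\leftrightarrow B'$, $C\leftrightarrow C'$ of the \emph{symmetric} twisted octahedron: identifying antipodal truncation faces of $P^\H_\tr$ and then doubling gives the non-orientable example with three components, and its orientable double cover (two copies of $D$ glued along antipodal boundary pairs) gives the four-component example. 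One small imprecision in your sketch: the lateral faces are glued by the mirror doubling, not by symmetries of $P$; the symmetry of $P$ is used only on the truncation faces, and it is precisely the fact that all six truncation triangles are congruent and undergo the \emph{same} infinitesimal deformation that makes the antipodal identifications compatible with the flex.
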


In the light of a result in \cite{HK08} mentioned in Section \ref{subsec:RigThms}, it would be interesting to compute the injectivity radius of the tube around the singular locus in our examples. This amounts to computing the minimum distance between edges of an infinitesimally flexible hyperideal polyhedron. The result of Hodgson and Kerckhoff implies that in every infinitesimally flexible hyperideal polyhedron some pair of edges must have distance at most $\arctanh \frac{1}{\sqrt{3}}$. We don't know if this is bound is sharp.

Once an infinitesimally flexible cone-manifold without vertices is found, other examples can be constructed by taking over it a finite-sheeted branched cover whose branching locus is a subset of the singular locus of the cone-manifold. If the covering space is branched enough over all of the components of the singular locus with cone angles less than $2\pi$, then all cone angles in the covering space are larger than $2\pi$. By applying this idea, we prove in Section \ref{subsec:LargeAngles} the following theorem.

\begin{Alphathm}
\label{thm:3}
There exists a compact infinitesimally flexible hyperbolic cone-mani\-fold without vertices with all cone angles larger than $2\pi$.
\end{Alphathm}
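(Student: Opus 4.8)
The plan is to take the cone-manifold $N$ without vertices produced by Theorem~\ref{thm:2} and pass to a suitable finite branched cover. Recall that $N$ has singular locus $\Sigma$ with (say) three or four components, exactly one of which, call it $\gamma_0$, already has cone angle $\alpha_0>2\pi$; the remaining components $\gamma_1,\dots,\gamma_k$ have cone angles $\alpha_1,\dots,\alpha_k<2\pi$. First I would recall the general principle already stated in the excerpt: if $\widetilde N\to N$ is a branched cover whose branching locus is contained in $\Sigma$, then $\widetilde N$ inherits a hyperbolic cone-manifold structure in which the cone angle along a component of the preimage of $\gamma_i$ lying over $\gamma_i$ with local branching degree $d$ equals $d\cdot\alpha_i$; in particular if $d\ge 2$ over every $\gamma_i$ with $i\ge 1$, then every cone angle of $\widetilde N$ is larger than $2\pi$ (for $\gamma_0$ it stays $\ge\alpha_0>2\pi$ whether or not we branch there). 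So the arithmetic part is easy; the content is (a) producing such a cover and (b) checking that infinitesimal flexibility is inherited.

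For (a), I would exhibit a surjection $\pi_1(N\setminus\Sigma)\to G$ onto a finite group $G$ together with an element for each meridian $\mu_i$ of $\gamma_i$ whose image has order $\ge 2$ for $i\ge 1$, so that the associated cover of $N\setminus\Sigma$ completes to a branched cover $\widetilde N\to N$ branched with degree $\ge 2$ over each $\gamma_1,\dots,\gamma_k$. Concretely, composing with the abelianization and then reducing mod $2$ gives a map $\pi_1(N\setminus\Sigma)\to H_1(N\setminus\Sigma;\Z/2)$; each meridian $\mu_i$ maps to a generator of the corresponding $\Z/2$ summand, since the meridians generate the kernel of $H_1(N\setminus\Sigma)\to H_1(N)$ and are part of a basis. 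Choosing the homomorphism $H_1(N\setminus\Sigma;\Z/2)\to\Z/2$ that is nonzero on $[\mu_1]$ (and if necessary composing several such, or mapping to $(\Z/2)^k$) yields a connected double or $2^k$-fold cover branched over the chosen components. If $N$ is the non-orientable example with three components one must also be a little careful that the resulting cover is still a manifold and still compact, which is automatic for finite covers.

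For (b), the inheritance of infinitesimal flexibility, I would argue as follows. An infinitesimal isometric deformation of $N$ fixing all cone angles is, by definition, a nonzero element of the relevant first cohomology (of $N\setminus\Sigma$ with coefficients in the flat $\mathfrak{sl}_2\C$, or $\mathfrak{so}(3,1)$, bundle, subject to the angle-fixing boundary conditions) that is not induced by a global isometry. Given such a class $v$ on $N$, its pullback $p^*v$ under the covering map $p\co\widetilde N\to N$ is an infinitesimal isometric deformation of $\widetilde N$; the cone angles are multiplied by the branching degrees, which are constant, so $p^*v$ still fixes all cone angles of $\widetilde N$. The only thing to rule out is that $p^*v$ is trivial, i.e.\ that it comes from a global isometry of $\widetilde N$ — but if $p^*v$ were a coboundary on $\widetilde N$, then averaging over the deck group $G$ (using that we may take $v$ itself to be, after the standard argument, $G$-invariant, or simply using the transfer map $p_*p^*=|G|\cdot\id$ on cohomology) would show $v$ is a coboundary on $N$, contradicting infinitesimal flexibility of $N$. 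Hence $\widetilde N$ is infinitesimally flexible, and by construction all its cone angles exceed $2\pi$, proving the theorem.

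I expect the main obstacle to be the bookkeeping in step (a): one must make sure that the finite cover one writes down (i) is connected, (ii) is genuinely branched, with degree at least two, over \emph{every} component $\gamma_i$ of cone angle $<2\pi$ — not merely over one of them — and (iii) does not accidentally destroy compactness or turn the would-be cone-manifold into something with worse singularities. Once the cover is in hand, the transfer/averaging argument for inheritance of flexibility in step (b) is routine and short.
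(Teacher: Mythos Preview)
Your overall strategy --- pass to a finite branched cover branched over the components with small cone angle --- is exactly the paper's approach, and your argument for part (b) (inheritance of flexibility via the transfer map on the unbranched cover of the complement) is fine. The gap is in the arithmetic and in your concrete construction in part (a).

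You assert that branching degree $d\ge 2$ over each $\gamma_i$ with $\alpha_i<2\pi$ yields cone angles $d\alpha_i>2\pi$. That only holds if $\alpha_i>\pi$, which you never check and which is in fact not true for the manifolds of Theorem~\ref{thm:2}. Those manifolds are glued from copies of a truncated twisted octahedron, and as the paper shows in Lemma~\ref{lem:AngleEst} the dihedral angles at the ``short'' edges approach $\pi/6$, so the corresponding cone angles in the double are near $\pi/3$. A degree-$2$ cover therefore cannot push all cone angles past $2\pi$; your $(\Z/2)^k$ proposal is insufficient in principle, not just in detail. Relatedly, your claim that the meridians form part of a basis of $H_1(N\setminus\Sigma;\Z/2)$ is asserted without proof; for the rather complicated manifold $N$ of Theorem~\ref{thm:2} this would need to be verified, and it is not obvious.

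The paper avoids both difficulties by working one level down: instead of covering $N$, it first controls the dihedral angles (Lemma~\ref{lem:AngleEst}: all angles $>\pi/7$, hence all cone angles in the double $D$ exceed $2\pi/7$), then observes that $D\setminus\Sigma_+$ is homeomorphic to $\Sph^3$ minus the $1$-skeleton of a triangular prism, whose fundamental group is completely explicit. It then writes down by hand a homomorphism $\pi_1\to\Z_7$ sending every meridian to a nonzero element (Lemma~\ref{lem:7Fold}), takes the associated $7$-fold branched cover $D'$, and finally doubles $D'$. So the paper's cover has branching degree $7$, precisely matched to the lower bound on the angles, and its existence is established by an explicit computation in a concrete group rather than by a general homological argument. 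Your proof would go through if you replaced the $\Z/2$ construction by a $\Z/m$ construction with $m$ large enough (and verified that the relevant meridians have infinite order in $H_1$, or argued directly with $\pi_1$), but as written it does not.
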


The deaveraging lemma of Pogorelov, shows how an infinitesimally flexible polyhedron gives rise to a pair of non-congruent polyhedra with isometric boundaries. With the help of this, we show that global rigidity fails if cone angles larger than $2\pi$ are allowed.

\begin{Alphathm}
\label{thm:4}
There exist non-isometric compact cone-manifolds $M_1, M_2$ with singular loci $\Sigma_1, \Sigma_2$ such that pairs $(M_1, \Sigma_1)$ and $(M_2, \Sigma_2)$ are homeomorphic and the cone angles at all singular segments in $M_1$ are equal to corresponding cone angles in $M_2$. Besides, $M_1$ and $M_2$ can be chosen arbitrarily close to each other in the Gromov-Hausdorff metric.
\end{Alphathm}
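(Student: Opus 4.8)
The plan is to apply Pogorelov's deaveraging lemma to an infinitesimally flexible polyhedron and then glue the resulting pair of polyhedra, together with their mirror copies, into two cone-manifolds that are mirror images of each other; everything below is routine except for the genuine non-isometry of the two, which is where the real work lies.

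Let $P$ be one of the infinitesimally flexible compact hyperbolic polyhedra produced in the proof of Theorem~\ref{thm:1}, combinatorially an octahedron and hence with triangular faces; as an infinitesimally flexible polyhedron it is non-convex and therefore has a reflex edge. Fix a nontrivial infinitesimal isometric deformation of $P$. By the deaveraging lemma, for each sufficiently small $t>0$ this deformation produces hyperbolic polyhedra $P^+_t$ and $P^-_t$, both combinatorially the octahedron, such that their boundary metrics are isometric by an isometry $\phi_t\co\partial P^+_t\to\partial P^-_t$ carrying faces to faces and edges to edges, such that $P^+_t$ and $P^-_t$ are not congruent, and such that $P^\pm_t\to P$ and $\phi_t\to\id$ as $t\to0$. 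Put
\[
 M_t:=P^+_t\cup_{\phi_t}\overline{P^-_t},\qquad
 M'_t:=P^-_t\cup_{\phi_t^{-1}}\overline{P^+_t},
\]
where the overline denotes an orientation-reversed copy, so that $M'_t=\overline{M_t}$. Both are compact hyperbolic cone-manifolds homeomorphic to $\Sph^3$ whose singular locus is the $1$-skeleton of the octahedron, and the evident piecewise-linear map is a homeomorphism of pairs $(M_t,\Sigma_t)\to(M'_t,\Sigma'_t)$.

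Three of the four assertions are then immediate. Each edge $e$ of the octahedron yields a single singular edge of $M_t$ along which $P^+_t$ and $\overline{P^-_t}$ meet, so its cone angle is $\theta^+_t(e)+\theta^-_t(e)$, where $\theta^\pm_t$ are the dihedral angles of $P^\pm_t$; the corresponding edge of $M'_t$ has cone angle $\theta^-_t(e)+\theta^+_t(e)$, the same number. (For small $t$ the cone angle at the reflex edge is close to twice its dihedral angle in $P$ and hence exceeds $2\pi$, so these examples do involve cone angles larger than $2\pi$, as announced.) Finally $M_t,M'_t\to DP$ in the Gromov--Hausdorff metric as $t\to0$, where $DP$ is the double of $P$, hence $d_{GH}(M_t,M'_t)\to0$ and the two cone-manifolds can be taken arbitrarily close.

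It remains to prove $M_t\not\cong M'_t$, i.e.\ that $M_t$ is chiral, and this is the expected main obstacle. The eight geodesic faces of $P^+_t$, which coincide in $M_t$ with those of $\overline{P^-_t}$, are totally geodesic triangles bounded by $\Sigma_t$; conversely a totally geodesic triangle bounded by $\Sigma_t$ can neither pass through the interior of a singular edge (cone angle $\neq2\pi$) nor cross the gluing surface (piecewise geodesic, but bending along $\Sigma_t$), so it lies inside one of the two polyhedra and is one of its faces. Granting that for small $t$ there are no further such triangles --- so that the eight faces of $M_t$ are intrinsically distinguished --- cutting $M_t$ along them recovers $P^+_t\sqcup\overline{P^-_t}$ with its intrinsic metric, and likewise $M'_t$ gives $P^-_t\sqcup\overline{P^+_t}$. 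An isometry $M_t\to M'_t$ would carry one of these connected pieces isometrically onto one of the others, forcing a congruence $P^+_t\cong P^-_t$ --- excluded by the deaveraging lemma --- or $P^+_t\cong\overline{P^+_t}$ --- excluded once $P^+_t$ is chiral, which holds for a generic choice of $P$ and small $t$, or can be verified directly in an explicit infinitesimally flexible octahedron. The one point that genuinely needs care is the intermediate claim that the only totally geodesic triangles of $M_t$ bounded by $\Sigma_t$ are its eight faces; I would establish this by a compactness argument based at $DP$, where the statement should be transparent, together with a perturbation argument as $t$ varies.
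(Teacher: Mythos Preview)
Your construction does not produce non-isometric cone-manifolds: by your own description $M'_t=\overline{M_t}$, and reversing orientation does not change the underlying metric, so $M_t$ and $M'_t$ are literally the same metric space. Concretely, the reflection that defines the overline is itself an isometry $M_t\to M'_t$; tracing it through your cutting argument, it carries the piece $P^+_t$ of $M_t$ onto the piece $\overline{P^+_t}$ of $M'_t$, and $P^+_t$ is of course congruent to its mirror image $\overline{P^+_t}$ --- by that very reflection. ``Chirality'' of a polyhedron only rules out an \emph{orientation-preserving} congruence to its mirror image; an orientation-reversing one always exists. Hence the case ``$P^+_t\cong\overline{P^+_t}$'' that you try to exclude is never excluded, and the non-isometry claim collapses. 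The point you flag as delicate (uniqueness of the totally geodesic triangles) is a red herring; the construction fails before one reaches it.

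The paper's argument is of a completely different nature and avoids comparing two specific manifolds. It takes a \emph{two}-parameter family $P(a,b)$ of symmetric hyperbolic twisted octahedra, deaverages each to $P_{\pm t}(a,b)$, and forms three one-parameter families $M^1_t=P_t\cup_\partial P_t$, $M^2_t=P_t\cup_\partial P_{-t}$, $M^3_t=P_{-t}\cup_\partial P_{-t}$, all meeting at $t=0$. The parameter space is thus homeomorphic to $\Disk^2\times Y$ with $Y$ a triod. By the threefold symmetry each manifold has only three distinct cone angles, giving a continuous map to $\R^3$; since $\Disk^2\times Y$ cannot embed in $\R^3$, two manifolds from different branches must share all cone angles. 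The extra parameters $(a,b)$ are essential: with a single polyhedron, as in your attempt, one has only the triod $Y$ and no dimension obstruction.
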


Theorem \ref{thm:4} follows from Proposition \ref{prp:SameAngles}.

\subsection{Acknowledgements}
I would like to thank Jean-Marc Schlenker for constant interest to this work and for suggestion to use branched covers to prove Theorem \ref{thm:3}. I also thank Rafe Mazzeo for an interesting discussion we had on the subject of this paper, and Gr\'{e}goire Montcouquiol for pointing out the work \cite{HK08} and other results.

\section{Infinitesimally flexible cone-manifolds with vertices}
\label{sec:WithVert}
\subsection{Sch\"onhardt's twisted octahedron}
\label{subsec:InfFlexPol}
\begin{dfn}
\label{dfn:InfFlexPol}
Let $P \subset \R^3$ be a polyhedron with triangular faces and vertex set $\cV = \{p_1, \ldots, p_n\}$. An \emph{infinitesimal isometric deformation} of $P$ is a map $q: \cV \to \R^3$ such that
\begin{equation}
\label{eqn:InfFlexPol}
\left.\frac{d}{dt}\right|_{t=0} \dist(p_i+tq_i, p_j+tq_j) = 0,
\end{equation}
for all edges $p_ip_j$ of $P$. Here $q_i$ denotes $q(p_i)$.

An infinitesimal isometric deformation is called \emph{trivial} if $q_i = \xi(p_i)$, for some Killing field (global infinitesimal isometric deformation) $\xi$ of $\R^3$.

A polyhedron $P$ is called \emph{infinitesimally flexible} if it has a non-trivial infinitesimal isometric deformation.
\end{dfn}

A simple calculation shows that condition \eqref{eqn:InfFlexPol} is equivalent to
\begin{equation}
\label{eqn:ScalProd}
\langle p_i - p_j, q_i - q_j \rangle = 0.
\end{equation}

\begin{exl}[Sch\"onhardt \cite{Scn28}, Wunderlich \cite{Wun65}]
\label{exl:TwistOcta}
Let $ABC$ be an equilateral triangle in $\R^3$, and let $l$ be a line that passes through the center of $ABC$ orthogonally to the plane of the triangle. Let $A'B'C'$ be the image of $ABC$ under a screw motion with axis $l$ and rotation angle $\frac{\pi}{2}$. Consider a polyhedron $P$ bounded by triangles $ABC$, $A'B'C'$, $ABC'$, $A'BC$, $AB'C$, $A'B'C$, $AB'C'$, and $A'BC'$. The polyhedron $P$ is combinatorially isomorphic to an octahedron, and has three edges with dihedral angles bigger than $\pi$: the edges $AB'$, $BC'$, and $CA'$, see Figure \ref{fig:TwistOcta}.

\begin{figure}[ht]
\begin{center}
\input{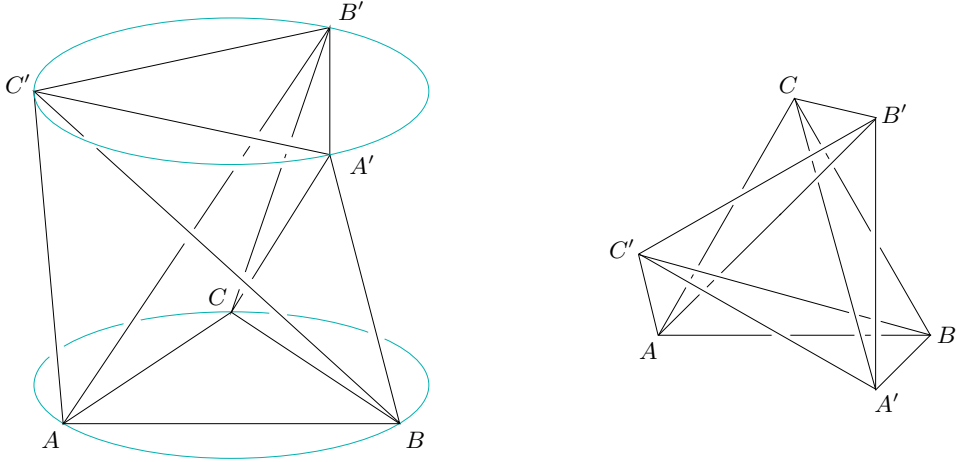}
\end{center}
\caption{Sch\"onhardt's twisted octahedron.}
\label{fig:TwistOcta}
\end{figure}

\end{exl}

\begin{lem}
\label{lem:InfFlex}
The polyhedron from Example \ref{exl:TwistOcta} is infinitesimally flexible.
\end{lem}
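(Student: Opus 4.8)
The plan is to write down an explicit non-trivial infinitesimal isometric deformation of $P$. The most transparent way to find one is to realize $P$ as a member of a one-parameter family of ``generalized twisted octahedra'' in which both the twist angle \emph{and} the distance between the two equilateral triangles are allowed to vary, and then to take the velocity field of this family at the instant it passes through $P$. First I would fix coordinates: put the axis $l$ along the $z$-axis and let $ABC$ be an equilateral triangle of circumradius $r$ in the plane $z=0$ centred at the origin. For $t\in\R$ and a smooth positive function $h(t)$, let $P_t$ be the framework whose vertices are $A,B,C$ (held fixed) together with $A'(t),B'(t),C'(t)$, the image of $ABC$ under rotation by the angle $t$ about $l$ followed by translation to height $h(t)$, and whose edge set is that of the octahedron as in Example~\ref{exl:TwistOcta}. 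Then the polyhedron of the lemma is $P_{\pi/2}$ for the appropriate value $h(\pi/2)=h_0$, whatever the translation length $h_0$ of the screw motion is.

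Since $ABC$ stays fixed and the primed triangle moves rigidly, the six edges lying inside these two triangles keep constant length along the family, so only the six ``diagonal'' edges need to be examined. These fall into two orbits of three under the order-$3$ rotational symmetry of $P_t$: one represented by $AC'$, the other (the reflex one) by $AB'$. A short computation gives
\begin{align*}
\dist(A,C'(t))^2 &= 2r^2 - 2r^2\sin\bigl(t-\tfrac{\pi}{6}\bigr) + h(t)^2, \\
\dist(A,B'(t))^2 &= 2r^2 + 2r^2\cos\bigl(t-\tfrac{\pi}{3}\bigr) + h(t)^2 .
\end{align*}
Differentiating and requiring both derivatives to vanish gives the two conditions $h h' = r^2\cos\bigl(t-\tfrac{\pi}{6}\bigr)$ and $h h' = r^2\sin\bigl(t-\tfrac{\pi}{3}\bigr)$, which are compatible precisely when $\cos\bigl(t-\tfrac{\pi}{6}\bigr) = \sin\bigl(t-\tfrac{\pi}{3}\bigr)$, that is, exactly when $t=\tfrac{\pi}{2}$. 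This is the whole point of the construction: at the twist angle $\tfrac{\pi}{2}$ prescribed in Example~\ref{exl:TwistOcta}, and only there, one can pull the top triangle away from the bottom one while keeping \emph{both} families of diagonals stationary to first order.

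Accordingly, choosing $h'(\pi/2)$ with $h_0\,h'(\pi/2)=r^2/2$, I would let $q$ be the velocity field of $P_t$ at $t=\tfrac{\pi}{2}$, i.e. $q_A=q_B=q_C=0$ and $q_{A'},q_{B'},q_{C'}$ the velocities of the primed vertices. Then $\frac{d}{dt}\dist(p_i(t),p_j(t))^2\big|_{t=\pi/2}=0$ for the two diagonals above, hence for all six by the rotational symmetry, hence for all twelve edges — which is exactly condition~\eqref{eqn:InfFlexPol} (equivalently \eqref{eqn:ScalProd}). It remains to check that $q$ is non-trivial: it vanishes at the three non-collinear points $A,B,C$, so the only Killing field of $\R^3$ agreeing with it there is the zero field, whereas $q_{A'}\neq 0$. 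I expect the only slightly delicate points to be the trigonometric identity that singles out $t=\tfrac{\pi}{2}$ and the bookkeeping reducing the twelve edge conditions to the two diagonal orbits via the threefold symmetry — both routine. As an alternative, one could instead note that the octahedron is isostatic ($12=3\cdot 6-6$), so infinitesimal flexibility is equivalent to the existence of a non-zero equilibrium self-stress, and search for a symmetric one; but the deforming family above seems cleaner.
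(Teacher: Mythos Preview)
Your proof is correct. Both you and the paper produce the same infinitesimal deformation --- the bottom triangle is held fixed and the top triangle undergoes an infinitesimal screw motion about the axis $l$ --- but you arrive at it by a different route. The paper defines $q(C')$ as a vector orthogonal to the plane $ABC'$ (which immediately takes care of the two diagonals in that face), then uses the geometric observation that this plane passes through the centre of $A'B'C'$ to conclude that $q(C')$ is tangent to the cylinder about $l$, hence by symmetry $A'B'C'$ moves rigidly. Your approach instead starts from the screw-motion ansatz, computes the two diagonal lengths explicitly, and solves for the vertical speed $h'$; the two resulting linear conditions on $hh'$ are compatible precisely when $t=\tfrac{\pi}{2}$. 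Your argument is a bit more computational, but it has the merit of explaining \emph{why} the twist angle $\tfrac{\pi}{2}$ is singled out, whereas in the paper this angle is simply given. The paper's argument, in turn, is coordinate-free and identifies the deformation vector by a clean synthetic fact. Both are short and either would be acceptable here.
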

\begin{proof}
Put $q(A) = q(B) = q(C) = 0$. Let $q(C')$ be a vector orthogonal to the plane $ABC'$, and let $q(A')$ and $q(B')$ be images of $q(C')$ under rotations by $\frac{2\pi}{3}$ and $\frac{4\pi}{3}$ around $l$. Clearly, the infinitesimal deformation $q$ preserves in the first order the side lengths of the triangles $ABC'$, $A'BC$, and $AB'C$. Let us show that the side lengths of the triangle $A'B'C'$ are also infinitesimally preserved. Indeed, the plane $ABC'$ is easily seen to pass through the center of the triangle $A'B'C'$. Therefore the vector $q(C')$ is tangent to a cylinder with axis $l$. By symmetry, the triangle $A'B'C'$ undergoes an infinitesimal screw motion with axis $l$. Hence, its side lengths are also infinitesimally constant.
\end{proof}

\subsection{Infinitesimal Pogorelov map}
Infinitesimally flexible hyperbolic po\-lyhedra are defined similarly to Euclidean ones, see Definition \ref{dfn:InfFlexPol}. We assume $q_i \in T_{p_i}\H^3$ and replace $p + tq$ in \eqref{eqn:InfFlexPol} by $\exp_p(tq)$, where
$$
\exp_p \co T_p\H^3 \to \H^3
$$
is the exponential map. Pogorelov \cite[Chapter 5]{Pog73} showed that a hyperbolic polyhedron is infinitesimally flexible if and only if its image in a Klein model is an infinitesimally flexible Euclidean polyhedron.

\begin{thm}[Pogorelov]
\label{thm:Pogorelov}
Let $P \subset \R^3$ be a Euclidean polyhedron. Take an arbitrary ball $B \subset \R^3$ that contains $P$ in its interior, and regard $B$ as a Klein model of the hyperbolic space $\H^3$. Denote by $P^\H \subset \H^3$ the hyperbolic polyhedron that corresponds to $P \subset B$. Then $P^\H$ is infinitesimally flexible if and only if $P$ is infinitesimally flexible.

More precisely, there is a canonical way to associate with an infinitesimal isometric deformation $q$ of $P$ an infinitesimal isometric deformation $q^\H$ of $P^\H$, so that $q^\H$ is trivial if and only if $q$ is trivial.
\end{thm}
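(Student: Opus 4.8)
The plan is to construct the map $q \mapsto q^\H$ explicitly in coordinates and then verify that it intertwines the two notions of infinitesimal isometric deformation. I would work in the projective (Klein) model realized as the open unit ball $B \subset \R^3 \subset \RP^3$, so that geodesics of $\H^3$ are straight chords. Concretely, I would use the hyperboloid model $\H^3 = \{x \in \R^{3,1} : \langle x,x\rangle = -1,\ x_0 > 0\}$ with the Minkowski form, together with the radial projection $\pi\co \H^3 \to B$, $\pi(x_0,x_1,x_2,x_3) = (x_1/x_0, x_2/x_0, x_3/x_0)$. A point $p \in B$ corresponds to $p^\H = \pi^{-1}(p)$. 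The central observation is that because $\pi$ carries hyperbolic geodesics to Euclidean segments, the hyperbolic distance between $p_i^\H$ and $p_j^\H$ is a function of the Euclidean positions of $p_i$, $p_j$ (and of the boundary sphere), and its first variation in a deformation vanishes precisely when a certain bilinear pairing of the displacement vectors vanishes.

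The key computational step is a pointwise linear-algebra identity. Differentiating $\pi$ at a point $x \in \H^3$ gives an isomorphism $d\pi_x\co T_x\H^3 \to T_{\pi(x)}\R^3 = \R^3$; I would define $q^\H$ at $p_i^\H$ to be the preimage under $d\pi_{p_i^\H}$ of $q_i$, possibly after an adjustment. The claim to verify is that
\begin{equation}
\label{eqn:PogorelovPairing}
\left.\frac{d}{dt}\right|_{t=0}\dist_\H\bigl(\exp_{p_i^\H}(t q^\H_i),\, \exp_{p_j^\H}(t q^\H_j)\bigr)
= \lambda_{ij}\,\langle p_i - p_j,\ q_i - q_j\rangle
\end{equation}
for a nonzero scalar $\lambda_{ij}$ depending only on $p_i$, $p_j$ and the model. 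Granting this, \eqref{eqn:ScalProd} holds for every edge $p_ip_j$ of $P$ if and only if the hyperbolic analogue of \eqref{eqn:InfFlexPol} holds for every edge of $P^\H$, which is exactly the equivalence ``$P$ infinitesimally flexible $\iff$ $P^\H$ infinitesimally flexible.'' The cleanest route to \eqref{eqn:PogorelovPairing} is to compute both sides in the Minkowski model: the hyperbolic distance satisfies $\cosh \dist_\H(x,y) = -\langle x,y\rangle$, so its first variation along a deformation $x(t), y(t)$ with $\dot x(0) = \tilde q_i \in T_x\H^3$, $\dot y(0) = \tilde q_j$ is $-(\langle \tilde q_i, y\rangle + \langle x, \tilde q_j\rangle)/\sinh\dist_\H$; one then substitutes the explicit formula for $\pi^{-1}$ and for $d\pi^{-1}$ and simplifies.

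I expect the main obstacle to be getting the normalization of $q^\H$ exactly right so that triviality is preserved in both directions — the second, "more precise" assertion of the theorem. The differential $d\pi^{-1}$ is the natural candidate, but one must check two things: first, that Killing fields of $\R^3$ (restricted to the vertices) go to Killing fields of $\H^3$ and conversely. This is where the projective description is essential: the isometry group $\mathrm{O}^+(3,1)$ of $\H^3$ acts on $B \subset \RP^3$ by projective transformations preserving the ball, and its Lie algebra, restricted to tangent vectors of $B$, consists of polynomial (degree $\le 2$) vector fields on $\R^3$; one shows that the subspace of these that, composed with $d\pi$, land in the Euclidean Killing fields is exactly the Euclidean Killing fields themselves — or rather, one must track the precise finite-dimensional linear map between the $6$-dimensional space of hyperbolic Killing fields and the $6$-dimensional space of Euclidean Killing fields induced by $d\pi^{-1}$ and show it is an isomorphism. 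Second, one must confirm that $q \mapsto q^\H$ is linear (clear, since $d\pi^{-1}$ is) and injective on the relevant finite-dimensional space, so that non-trivial goes to non-trivial. Once the pointwise identity \eqref{eqn:PogorelovPairing} and the Killing-field correspondence are in hand, the theorem follows formally; I would organize the write-up as (1) set up the Minkowski/Klein models and the map $\pi$, (2) define $q^\H := (d\pi)^{-1} q$, (3) prove \eqref{eqn:PogorelovPairing} by direct computation, (4) prove the Killing-field correspondence, and (5) assemble the statement.
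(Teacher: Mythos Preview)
The paper does not actually prove this theorem; it cites Pogorelov \cite[Chapter~5]{Pog73} and remarks that the argument carries over to the polyhedral case, adding that the underlying reason is the projective invariance of infinitesimal rigidity \cite{Izm09}. So there is no detailed proof in the paper to compare against, but the paper does give one very pointed hint that bears directly on your plan: the Remark immediately after Corollary~\ref{cor:FlexHypOcta} says explicitly that the infinitesimal Pogorelov map $q \mapsto q^\H$ is \emph{not} the identity in the Klein model.

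That remark pinpoints the gap in your proposal. Your candidate $q^\H := (d\pi)^{-1} q$ \emph{is} the identity in the Klein model (once one identifies $T_pB$ with $\R^3$), and with that choice the identity \eqref{eqn:PogorelovPairing} is false. A short computation shows why: writing $x_k = (1,p_k)/\sqrt{1-|p_k|^2}$ in the hyperboloid and differentiating $\cosh d_\H = -\langle x_i,x_j\rangle_{(3,1)}$ along $\dot p_k = q_k$, one obtains terms $\langle q_i,p_j\rangle$ and $\langle q_j,p_i\rangle$ with one common scalar factor, but terms $\langle p_i,q_i\rangle$ and $\langle p_j,q_j\rangle$ multiplied by $\cosh d_\H$ and by different factors $1/(1-|p_k|^2)$. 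These do not combine into a scalar multiple of $\langle p_i-p_j,\,q_i-q_j\rangle$. Your hedge ``possibly after an adjustment'' is exactly where the content of Pogorelov's construction lives, and you have not supplied it.

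The correct map is a position-dependent linear transformation of the velocity at each vertex (not a mere scalar rescaling): one way to describe it is to decompose $q$ at $p$ into components radial and tangential to the model sphere and rescale them by distinct factors depending on $|p|$; another, more conceptual, route is the one the paper alludes to---set up an explicit isomorphism between the Lie algebras of Euclidean and hyperbolic Killing fields that sends the infinitesimal rotation about a Euclidean line $L\subset B$ to the infinitesimal hyperbolic rotation about the same line viewed as a geodesic in the Klein model. Since an infinitesimal isometric deformation of a simplicial polyhedron is equivalent to an assignment of a Killing field to each face, with adjacent faces differing by an infinitesimal rotation about their common edge, this Lie-algebra correspondence transports isometric deformations to isometric deformations and manifestly carries trivial to trivial. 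Your steps (1), (4), (5) survive; step (2) must be replaced by the correct definition, and then step (3) becomes either a direct verification of \eqref{eqn:PogorelovPairing} with the corrected $q^\H$, or is bypassed entirely by the Killing-field argument.
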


Pogorelov considered infinitesimal isometric deformations of smooth surfaces, but his arguments carry over to the polyhedral case.

The correspondence $q \mapsto q^H$ between infinitesimal isometric deformations of $P$ and $P^\H$ is called \emph{infinitesimal Pogorelov map}. Its existence can be explained by the fact that infinitesimal flexibility is a projective property, rather than a metric one, see \cite{Izm09}.

By regarding the polyhedron from Example \ref{exl:TwistOcta} as the image of a hyperbolic polyhedron in a Klein model, we obtain the following statement.

\begin{cor}
\label{cor:FlexHypOcta}
There exists an infinitesimally flexible hyperbolic polyhedron combinatorially equivalent to the octahedron.
\end{cor}

\begin{rem}
Note that the infinitesimal Pogorelov map $q \mapsto q^\H$ is not the identity in the Klein model $B$. For example, the vectors $q(A')$, $q(B')$, and $q(C')$ in the proof of Lemma \ref{lem:InfFlex} are tangent to a Euclidean cylinder, whereas the corresponding vectors $q^\H(A')$, $q^\H(B')$, and $q^\H(C')$ will be tangent to an equidistant surface of the line $l$ in the hyperbolic metric of the Klein model (assuming that the center of $P$ is the center of the Klein model).
\end{rem}

\subsection{Flexibility of the double}
\label{subsec:FlexDouble}
Let $P$ be a hyperbolic polyhedron. Let $M$ be the \emph{double} of $P$, that is the result of gluing $P$ and its isometric copy $P'$ along pairs of corresponding faces. Then $M$ is a hyperbolic cone-manifold homeomorphic to a $3$--dimensional sphere, and its singular locus is the skeleton of $P$.

\begin{prp}
\label{prp:FlexDouble}
The double of an infinitesimally flexible polyhedron is an infinitesimally flexible cone-manifold with vertices.
\end{prp}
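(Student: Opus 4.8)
The plan is to produce a nontrivial infinitesimal deformation of $M$ that fixes all cone angles, starting from a nontrivial infinitesimal isometric deformation $q$ of $P$. The one structural property of $q$ I would extract first is that, since every face of $P$ is a triangle, the restriction of $q$ to each face $F$ is the restriction of a Killing field $\xi_F$ of $\H^3$: a triangle is infinitesimally rigid, so an infinitesimal isometric deformation of its three vertices is the restriction of a global infinitesimal isometry. In particular $q$ preserves, to first order, the intrinsic metric of every face and all face angles at all vertices. I would also note that $-q$ is again an infinitesimal isometric deformation of $P$ (the first variation of an edge length is linear in the deformation and vanishes for $q$), moving each face by $-\xi_F$.

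Next I would assemble the deformation of $M=P\cup P'$. Identifying $P'$ with $P$ through the gluing, deform the $P$-copy by $q$ and the $P'$-copy by $-q$, and reglue corresponding faces: by the previous paragraph the corresponding faces of the two deformed polyhedra are isometric to first order, so the regluing is possible; moreover the link of each singular edge (a circle whose length is the cone angle) and the link of each vertex (a spherical cone-surface obtained by gluing two spherical polygons along boundaries whose edge lengths are the face angles, fixed to first order) deform consistently, so one gets a genuine first-order family $M_t$ of hyperbolic cone-manifolds on the fixed pair $(M,\Sigma)$. The cone angle of $M_t$ along the singular edge coming from an edge $e$ of $P$ is the sum of the two dihedral angles of the two copies along $e$; under the $q$-deformation the first varies by $t\dot\alpha_e+o(t)$ and under the $-q$-deformation the second varies by $-t\dot\alpha_e+o(t)$, since a dihedral angle depends on the infinitesimal motion with linear first variation. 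The two contributions cancel, so $M_t$ has all cone angles constant to first order.

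It remains to check that $M_t$ is not a trivial deformation, and this is where I expect the real work to lie. The double $M$ carries the totally geodesic walls $F_1,\dots,F_k$ (the faces of $P$), and our deformation keeps each $F_i$ totally geodesic in $M_t$, because under both $q$ and $-q$ the faces move rigidly. As the walls are bounded by $\Sigma$ they are an isometry-invariant feature of $M_t$; hence if $M_t$ were trivial, the diffeomorphisms realizing it as isometric to $M$ would preserve the walls and, being isotopic to the identity, fix each of them and therefore preserve the chamber $P$. Their restrictions to $P$ would be a path of isometries carrying the deformed polyhedron back to $P$, whose derivative is the restriction of a Killing field of $\H^3$ — but that derivative is precisely $q$, contradicting the nontriviality of $q$. (Under the convention that a deformation of a cone-manifold glued from polyhedra is trivial exactly when every chamber moves by a Killing field, compatibly with the gluing, this step is immediate, since $q$, hence $-q$, is not a Killing field.) Granting this, $M_t$ is a nontrivial cone-angle-preserving infinitesimal deformation of $M$, so $M$ is infinitesimally flexible; and $M$ has vertices because the skeleton of $P$ does. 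The only genuinely delicate point is the last one — making precise that a trivial deformation of the cone-manifold restricts to a trivial deformation of the chamber $P$; the regluing consistency and the cancellation of angle variations are routine.
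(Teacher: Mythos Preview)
Your proposal is correct and follows the same core idea as the paper: deform the two copies by $q$ and $-q$ so that the first-order variations of dihedral angles cancel and the cone angles are fixed. The only noteworthy difference is the non-triviality argument: the paper simply observes that the links of the singular vertices are deformed non-trivially (since a non-trivial $q$ must change some dihedral angle, hence the spherical polygons forming those links change shape), whereas you argue via the totally geodesic walls that a trivial deformation of $M$ would have to restrict to a Killing field on the chamber $P$, contradicting the non-triviality of $q$. Both arguments are sound; yours is a bit more structural, the paper's a bit more direct.
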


A non-trivial infinitesimal deformation of $M$ can be described as follows. Let $q$ be a non-trivial infinitesimal isometric deformation of $P$. The deformation $q$ preserves the edge lengths of $P$ in the first order, but changes the dihedral angles (otherwise it can be shown that $q$ is trivial). The opposite deformation $-q$, which consists of vectors $-q_i$, also preserves the edge lengths. A crucial point is that the variations of dihedral angles under $-q$ are the negatives of their variations under $q$. Thus if $M = P \cup P'$ and we deform $P$ and $P'$ according to $q$ and $-q$ respectively, then the angles around the segments of the singular locus are preserved in the first order. The resulting deformation is non-trivial, because the links of the vertices are deformed non-trivially.

Note that this infinitesimal deformation of $M$ preserves not only the angles around the edges, but also their lengths.

Corollary \ref{cor:FlexHypOcta} and Proposition \ref{prp:FlexDouble} imply Theorem \ref{thm:1}.

\subsection{Other examples of infinitesimally flexible polyhedra}

There is an elegant description of all infinitesimally flexible octahedra.

\begin{thm}[Blaschke \cite{Bla20}, Liebmann \cite{Lie20}]
\label{thm:BlaLie}
Let $P \subset \R^3$ be a polyhedron combinatorially isomorphic to the octahedron. Color the faces of $P$ black and white so that every pair of adjacent faces has different colors. Then $P$ is infinitesimally flexible if and only if the four black faces intersect at a point or, equivalently, if the four white faces intersect at a point. Intersection points can lie at infinity.
\end{thm}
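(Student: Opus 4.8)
The plan is to translate infinitesimal flexibility of $P$ into a statement about the four face--planes of one colour, to distill that statement into a single determinant, and to recognise the determinant as the $4\times4$ concurrency determinant of those planes. First, a combinatorial remark. The $1$--skeleton of the octahedron is $K_{2,2,2}$, i.e. the ``edge graph'' of the tetrahedron on $\{1,2,3,4\}$: label the six vertices of $P$ by the six $2$--element subsets $v_{ij}$, with $v_{ij}$ adjacent to $v_{kl}$ exactly when $\{i,j\}\cap\{k,l\}\neq\varnothing$, so that opposite vertices carry complementary pairs. The eight faces fall into the four ``stars'' $S_k=\{v_{ki}:i\neq k\}$ and the four ``triangles'' $T_{ijk}=\{v_{ij},v_{ik},v_{jk}\}$; adjacent faces always have opposite types, which is the two--colouring (say black $=$ stars). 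The key point is that \emph{every edge of $P$ lies in exactly one black face}: the edge $v_{mi}v_{mj}$ lies in $S_m$ and in no other star. Since a non--degenerate triangle is infinitesimally rigid (three points, three equations \eqref{eqn:ScalProd}, against the $6$--dimensional space of Killing fields of $\R^3$), an infinitesimal deformation $q$ of $P$ restricts on each black face $S_k$ to the vertex values of a unique Killing field $\xi_k$, and these agree at the common vertices $S_i\cap S_j=\{v_{ij}\}$; conversely four Killing fields agreeing pairwise at the $v_{ij}$ glue into a deformation preserving every edge, trivial iff all four coincide. Thus $P$ is infinitesimally flexible if and only if there are Killing fields $\xi_1,\dots,\xi_4$ of $\R^3$, not all equal, with $\xi_i(v_{ij})=\xi_j(v_{ij})$ for all six pairs $\{i,j\}$.

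Now reduce. After subtracting a global Killing field we may assume $\xi_1=0$; then each $\xi_k$ ($k=2,3,4$) vanishes at $v_{1k}$, hence $\xi_k(x)=r_k\times(x-v_{1k})$ for some $r_k\in\R^3$, and the conditions for the three pairs containing $1$ hold automatically. The three remaining conditions are
\begin{equation*}
r_i\times(v_{ij}-v_{1i})=r_j\times(v_{ij}-v_{1j}),\qquad \{i,j\}\in\bigl\{\{2,3\},\{2,4\},\{3,4\}\bigr\},
\end{equation*}
a homogeneous linear system of nine scalar equations in $(r_2,r_3,r_4)$; infinitesimal flexibility of $P$ means it has a nonzero solution. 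Dotting the $\{i,j\}$--equation with $v_{ij}-v_{1i}$ and with $v_{ij}-v_{1j}$ (which span the plane of the white face $T_{1ij}$) yields $r_j\perp N_{1ij}$ and $r_i\perp N_{1ij}$, where $N_{1ij}$ is the normal of that plane; running over the three pairs, each $r_k$ is forced --- generically --- to be a scalar multiple $\lambda_k\hat r_k$ of a fixed vector, the common direction of the two white planes through $v_{1k}$. Feeding this back in, the one surviving (``normal'') component of each of the three vector equations becomes a single scalar equation $\lambda_i\beta_{ij}+\lambda_j\gamma_{ij}=0$ with explicit coefficients $\beta_{ij},\gamma_{ij}$, namely values of white--plane linear forms at the vertices of the triangle $T_{234}$. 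Hence $P$ is infinitesimally flexible exactly when the $3\times3$ determinant of this last system vanishes.

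It remains to identify this determinant with concurrency of the black planes. Both ``$P$ is infinitesimally flexible'' and ``the four black planes are concurrent'' are invariant under projective transformations of $\R^3$ that keep $P$ non--degenerate --- the first by the projectivity of infinitesimal flexibility recalled after Theorem~\ref{thm:Pogorelov}, the second because projective maps preserve incidence. One may therefore work in the normal form in which the four black planes, \emph{assumed concurrent}, are $\{x=0\},\{y=0\},\{z=0\},\{x+y+z=0\}$: there $P$ is an explicit six--parameter family of vertices, and a direct check shows the $3\times3$ determinant above vanishes identically on it, which gives ``concurrent $\Rightarrow$ flexible''. For the converse, observe that the $3\times3$ determinant is a degree--$9$ polynomial in the eighteen vertex coordinates, as is the $4\times4$ determinant $C$ whose rows are the homogeneous coordinates of the four black planes; combined with the inclusion of zero loci just proved and the irreducibility of $C$, this forces the $3\times3$ determinant to be a constant multiple of $C$, so that flexibility is equivalent to $C=0$, i.e. to concurrency of the four black planes. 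The equivalence ``black concurrent $\Leftrightarrow$ white concurrent'' then follows, since each is equivalent to flexibility, and the case of a point of concurrency at infinity is reduced to the finite case by a projective transformation. (Consistency check: Sch\"onhardt's octahedron of Example~\ref{exl:TwistOcta} has its four white faces passing through the centre of $A'B'C'$, exactly as the criterion demands.)

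The main obstacle is this last identification: proving the polynomial identity between the reduced $3\times3$ determinant and the concurrency determinant $C$ --- equivalently, checking that the normal--form family is infinitesimally flexible for every value of the six parameters and that no further octahedra are. The sign bookkeeping throughout the reduction, and the removal of the various ``generic position'' hypotheses by a limiting argument, are routine but need to be carried out.
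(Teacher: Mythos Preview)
The paper does not prove Theorem~\ref{thm:BlaLie}; it is stated with attribution to Blaschke and Liebmann and used as a black box. So there is no ``paper's own proof'' to compare against, and your proposal has to stand on its own.

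Your set-up is sound and classical. Identifying the six vertices with the $2$-subsets of $\{1,2,3,4\}$, observing that every edge lies in exactly one ``star'' face, and hence reducing an infinitesimal isometric deformation to a $4$-tuple of Killing fields $\xi_1,\dots,\xi_4$ agreeing at the common vertices---all of this is correct and is essentially the standard first move. The normalisation $\xi_1=0$, $\xi_k(x)=r_k\times(x-v_{1k})$, and the subsequent elimination down to a $3\times 3$ homogeneous system in $(\lambda_2,\lambda_3,\lambda_4)$ are also fine in a generic regime.

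The genuine gap is exactly where you flag it: the identification of your $3\times 3$ determinant with the $4\times 4$ concurrency determinant $C$. Two of the ingredients you rely on are not established and are not obviously true.
\begin{itemize}
\item \emph{Degree.} You assert that your $3\times 3$ determinant is a degree-$9$ polynomial in the eighteen vertex coordinates. But with the natural polynomial choice $\hat r_k = N_{1ki}\times N_{1kj}$ (each $N$ quadratic, so $\hat r_k$ has degree $4$), the scalar coefficients $\beta_{ij},\gamma_{ij}$ obtained by dotting with $N_{1ij}$ have degree $7$, and the $3\times 3$ determinant then has degree $21$, not $9$. At best your determinant equals $C$ times an extraneous degree-$12$ factor coming from the elimination (normals, the ``generic direction'' choices), and you would have to show this factor does not vanish on any non-degenerate octahedron. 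That is real work, not bookkeeping.
\item \emph{Irreducibility.} You invoke irreducibility of $C$ as a polynomial in the vertex coordinates. The $4\times 4$ determinant is multilinear and irreducible in the sixteen \emph{plane} coefficients, but here you are pulling it back by the (non-product) map ``vertices $\mapsto$ black-plane coefficients'', and irreducibility does not automatically survive pull-back. This needs an argument.
\end{itemize}
Because of these two points, the inclusion ``concurrent $\Rightarrow$ flexible'' which you obtain from the normal form does not upgrade to an equality of zero loci by the degree/irreducibility trick as written.

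If you want to complete this line, the cleanest fix is to avoid the polynomial comparison altogether and argue directly in both directions. For ``flexible $\Rightarrow$ concurrent'': from $\xi_1=0$ and $\xi_k(x)=r_k\times(x-v_{1k})$, the pairwise agreement $\xi_i(v_{ij})=\xi_j(v_{ij})$ says that the relative angular velocity $r_i-r_j$ is parallel to the \emph{black} edge-line through $v_{1i},v_{ij},v_{1j}$\,? No---rather, work with the four \emph{white} faces instead (the roles are symmetric): then each $\xi_k$ is the Killing field of the white face $T$ opposite to vertex~$k$, the relative angular velocity between adjacent white faces is along their common edge, and the closure condition around a black vertex forces the four white planes to be concurrent; this is the classical ``angular velocity diagram'' argument and closes without any genericity assumption. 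Alternatively, compute the $12\times 12$ rigidity determinant directly and factor it, as Blaschke and Liebmann did.
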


For example, on Figure \ref{fig:TwistOcta} the faces $ABC'$, $A'BC$, $AB'C$, and $A'B'C'$ intersect at the center of the face $A'B'C'$. Figure \ref{fig:Gluck} shows another example, taken from \cite{Glu75}. It is easy to see that the intersection point of the lines $AB$ and $CD$ is common to all four black faces.

\begin{figure}[ht]
\begin{center}
\input{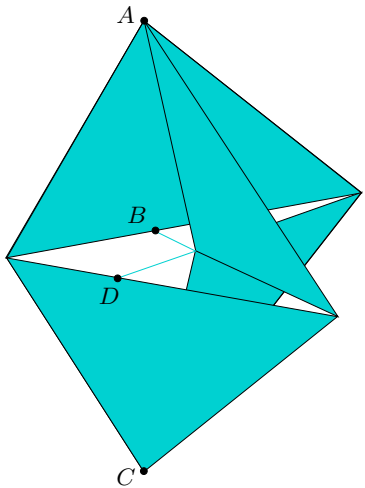}
\end{center}
\caption{If the points $A$, $B$, $C$, and $D$ lie in one plane, then this polyhedron is infinitesimally flexible.}
\label{fig:Gluck}
\end{figure}

\begin{cor}
A hyperbolic polyhedron combinatorially isomorphic to the octahedron is infinitesimally flexible if and only if its four black faces (see Theorem \ref{thm:BlaLie}) intersect at a point. The intersection point can lie at infinity or beyond infinity.
\end{cor}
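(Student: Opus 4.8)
The plan is to combine Pogorelov's theorem (Theorem~\ref{thm:Pogorelov}) with the Blaschke--Liebmann criterion (Theorem~\ref{thm:BlaLie}); the only real work is to translate the Euclidean incidence condition into an intrinsic statement about the hyperbolic polyhedron, and to check that this statement does not depend on the auxiliary choices made along the way.

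First I would fix a Klein model: choose a ball $B \subset \R^3 \subset \RP^3$ containing the image of $P^\H$, so that $P^\H$ corresponds to a Euclidean octahedron $P \subset B$. By Theorem~\ref{thm:Pogorelov}, $P^\H$ is infinitesimally flexible if and only if $P$ is. The two-coloring of the faces of the octahedron is purely combinatorial, hence carried from $P^\H$ to $P$ unchanged, so applying Theorem~\ref{thm:BlaLie} to $P$ shows that infinitesimal flexibility of $P$ is equivalent to the four affine planes carrying the black faces of $P$ having a common point in the projective closure $\RP^3$ --- where, exactly as allowed in Theorem~\ref{thm:BlaLie}, this common point may be a Euclidean point at infinity.

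Next I would read this incidence condition off $P^\H$. Each black face of $P^\H$ lies in a hyperbolic plane which, in the Klein model, is the intersection with $B$ of a unique affine plane of $\R^3$; these are precisely the four affine planes appearing above. Their common point $x \in \RP^3$ falls into one of three cases: $x \in B$, i.e. $x$ is an ordinary point of $\H^3$; $x \in \partial B$, i.e. $x$ is an ideal point, ``at infinity''; or $x \in \RP^3 \setminus \overline{B}$, i.e. $x$ is a hyperideal point, a point of the de Sitter space $\dS^3$ dual to $\H^3$, ``beyond infinity''. In each case the statement ``the four black hyperbolic planes pass through a common point'' is exactly the condition of the corollary, and the equivalence with the four white faces meeting at a point is inherited verbatim from Theorem~\ref{thm:BlaLie}.

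The main point requiring care --- rather than a genuine obstacle --- is verifying that the condition is intrinsic to $P^\H$: one must observe that a hyperbolic plane determines a unique projective plane of $\RP^3$, that concurrence of projective planes is preserved under the projective transformations of $\RP^3$ relating different Klein models, and that the trichotomy ordinary/ideal/hyperideal for the location of the intersection point is likewise model-independent. Once this is in place, the corollary is immediate.
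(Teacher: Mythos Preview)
Your proof is correct and follows exactly the paper's approach: the paper's own proof is the single sentence ``This is a direct consequence of Theorems \ref{thm:Pogorelov} and \ref{thm:BlaLie},'' and you have simply unpacked what that sentence means, including the (straightforward) verification that the projective incidence condition is model-independent.
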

\begin{proof}
This is a direct consequence of Theorems \ref{thm:Pogorelov} and \ref{thm:BlaLie}.
\end{proof}

A generalization of Example \ref{exl:TwistOcta} in a different direction is a \emph{twisted antiprism}. A regular antiprism is the convex hull of a regular $n$-gon $C$ and of its image $C'$ under a screw motion with rotation angle $\frac{\pi}{n}$ and axis orthogonal to $C$ and passing through its center. A twisted antiprism is obtained from the regular one by rotating the base $C'$ by $\frac{\pi}{2}$ around its center while preserving the combinatorics. A twisted antiprism has a non-trivial infinitesimal isometric deformation similar to that described in the proof of Lemma \ref{lem:InfFlex}.

One more example of an infinitesimally flexible polyhedron is Jessen's \emph{orthogonal icosahedron}, \cite{Gol78}.

\section{Infinitesimally flexible cone-manifolds without vertices}
\label{sec:InfFlexWOVert}
\subsection{Infinitesimal deformations of truncated hyperideal polyhedra}
\label{subsec:InfDefHyper}
Let $B \in \R^3$ be a ball, and $P \subset \R^3$ be a polyhedron such that all vertices of $P$ lie outside $B$, and all edges of $P$ intersect the interior of $B$. Regard $B$ as a Klein model of $\H^3$. Then the intersection of $P$ with the interior of $B$ is the image of a \emph{hyperideal} polyhedron $P^\H \subset \H^3$. For every vertex $p_i$ of $P$, denote by $p_i^*$ a plane in $\R^3$ polar to $p_i$ with respect to the boundary sphere of $B$. The corresponding hyperbolic plane, that we also denote $p_i^*$, intersects the boundary of the polyhedron $P^\H$ orthogonally. The plane $p_i^*$ cuts off an infinite end of $P^\H$ incident to the (hyperideal) point $p_i$. By cutting off all infinite ends we obtain a \emph{truncated hyperideal polyhedron} which we denote by $P^\H_\tr$.

Let $p_ip_j$ be an edge of the polyhedron $P$. Since $p_ip_j$ intersects the interior of $B$, the hyperbolic planes $p_i^*$ and $p_j^*$ don't intersect each other. It follows that the truncated hyperideal polyhedron $P^\H_\tr$ is combinatorially isomorphic to $P$ with small neighborhoods of vertices removed. See Figure \ref{fig:TruncOcta} for a combinatorial structure of a truncated hyperideal octahedron.

Denote by $p_{ij}$ the vertex of $P^\H_\tr$ that is the intersection point of the plane $p_i^*$ with the edge $p_ip_j$ of $P$.
Let us refer to the faces of $P^\H_\tr$ that are subsets of faces of $P$ as \emph{old} faces, and to the faces that span the planes $p_i^*$ as \emph{new} ones. Similarly, if an edge is a part of an edge of $P$, call it old; call all the other edges, that is all edges of new faces, new. Note that the dihedral angles at new edges are all equal to $\frac{\pi}{2}$.

Assume that all faces of the polyhedron $P$ are triangular. We are going to define infinitesimal deformations of $P^\H_\tr$ \emph{in the class of truncated hyperideal polyhedra}.

\begin{dfn}
\label{dfn:DefTruncPol}
Let $P^\H_\tr$ be a truncated hyperideal polyhedron such that all faces of $P$ are triangles. An \emph{infinitesimal deformation} of $P^\H_\tr$ in the class of truncated hyperideal polyhedra is an assignment $p_{ij} \mapsto q_{ij} \in T_{p_{ij}}\H^3$ such that if all of the vertices move along trajectories $p_{ij}(t) = \exp_{p_{ij}}(tq_{ij})$, then
\begin{enumerate}
\item \label{item1} all faces of $P^\H_\tr$ remain planar;
\item \label{item2} all angles between old and new faces remain $\frac{\pi}{2}$.
\end{enumerate}

An infinitesimal deformation is called \emph{isometric}, if in addition
\begin{enumerate}
\setcounter{enumi}{2}
\item \label{item3} lengths of all old edges are constant in the first order.
\end{enumerate}

A truncated hyperideal polyhedron $P^\H_\tr$ is called \emph{infinitesimally flexible} if it posesses an infinitesimal isometric deformation (in the class of truncated hyperideal polyhedra) that is not a restriction of a Killing field.
\end{dfn}

Some remarks are in order. First, it seems more consistent to require in conditions (\ref{item1}) and (\ref{item2}) that faces remain flat and angles are preserved in the first order only. However, it is easy to see that if several of the vertices $p_{ij}(t)$ remain infinitesimally coplanar, then they just remain coplanar. Similarly, if a dihedral angle has zero derivative under this deformation, then this angle is constant.

Second, condition (\ref{item3}) implies that lengths of new edges are also constant in the first order. Indeed, each old face of $P^\H_\tr$ is a right-angled hexagon, and lengths of three pairwise disjoint of its edges determine the lengths of the other three edges. Moreover, the Jacobian of the map that associates three new lengths to three old lengths is non-degenerate. This implies our assertion.

And third, if an isometric infinitesimal deformation is non-trivial, then some angles of new faces must have a non-zero variation. That is, an isometric infinitesimal deformation in the class of truncated hyperideal polyhedra is isometric only on the old part of the boundary.

\subsection{De Sitter space and infinitesimal Pogorelov map}

Let
$$
\langle x, y \rangle_{(3,1)} = -x_0y_0 + x_1y_1 + x_2y_2 + x_3y_3
$$
be the scalar product in the Minkowski space $\R^{3,1}$. We identify $\H^3$ with its hyperboloid model:
$$
\H^3 = \{x \in \R^{3,1}\ |\ \langle x, x \rangle_{(3,1)} = -1, x_0 > 0\},
$$
with the induced metric. The \emph{de Sitter space} is the one-sheeted hyperboloid with the induced metric:
$$
\dS^3 = \{x \in \R^{3,1}\ |\ \langle x, x \rangle_{(3,1)} = 1\}.
$$
The central projection from the origin maps $\H^3$ to the interior of the unit disk on the hyperplane $\{x_0 = 1\}$, and this yields the Klein model. The same projection maps the upper half of the de Sitter space to the exterior of the unit disk, which we thus consider as the \emph{Klein model of the de Sitter space}.

Let $P$ be a polyhedron as described at the beginning of Section \ref{subsec:InfDefHyper}. Then vertices of $P$ represent points in the de Sitter space. Polyhedron $P$ itself is the image of a \emph{hyperbolic-de Sitter polyhedron} $P^{\H\dS}$. We define an infinitesimal deformation of $P^{\H\dS}$ as a collection of vectors $q_i \in T_{p_i}\dS^3$. Every infinitesimal deformation of $P^{\H\dS}$ induces an infinitesimal deformation of $P^\H_\tr$ in the class of truncated hyperideal polyhedra.

\begin{lem}
\label{lem:DefDeSit}
Every infinitesimal deformation of $P^\H_\tr$ in the class of truncated hyperideal polyhedra is induced by an infinitesimal deformation of $P^{\H\dS}$.

An infinitesimal deformation of $P^\H_\tr$ is isometric if and only if for the corresponding deformation of $P^{\H\dS}$ we have
\begin{equation}
\label{eqn:DeSitIsom}
\langle p_i - p_j, q_i - q_j \rangle_{(3,1)} = 0,
\end{equation}
for all edges $ij$ of $P$.
\end{lem}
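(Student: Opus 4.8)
The plan is to establish a dictionary between the three defining conditions of an infinitesimal deformation of $P^\H_\tr$ in the class of truncated hyperideal polyhedra and the data of a collection of vectors $q_i \in T_{p_i}\dS^3$, and then to read off the isometry condition from this dictionary. First I would set up the basic geometric picture in the hyperboloid model: a vertex $p_i$ of $P$ lying outside $B$ corresponds, via the central projection, to a point of $\dS^3$, and the truncating plane $p_i^*$ is exactly the hyperbolic hyperplane polar to $p_i$, i.e. $p_i^* = \{x \in \H^3 \mid \langle x, p_i\rangle_{(3,1)} = 0\}$. The old edge $p_ip_j$ of $P$ lies on the geodesic of $\H^3$ through $p_{ij}$ and $p_{ji}$; projectively this geodesic is the line through the de Sitter points $p_i$ and $p_j$, and the truncation vertex $p_{ij}$ is the foot of the perpendicular from $p_i^*$, so it is determined by the pair $(p_i, p_j)$. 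The key structural observation, which I would state carefully, is that \emph{the truncated polyhedron $P^\H_\tr$ together with conditions (\ref{item1}) and (\ref{item2}) is rigidly determined by the positions of the de Sitter points $p_i$}: each old face lies in the plane through three of the $p_i$'s, each new face lies in the plane $p_i^*$, and the right-angle condition between them is automatic from polarity. Conversely, moving the $p_i$ inside $\dS^3$ drags all the faces, edges and vertices along while preserving planarity and the orthogonality of old and new faces — this is precisely conditions (\ref{item1}) and (\ref{item2}).

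With this correspondence in hand, the first assertion of the lemma is essentially a counting/surjectivity statement: any infinitesimal deformation $p_{ij}\mapsto q_{ij}$ satisfying (\ref{item1}) and (\ref{item2}) must, by the rigidity observation above, be the derivative of a motion of the $p_i$'s in $\dS^3$, hence is induced by some $q_i \in T_{p_i}\dS^3$. Concretely I would argue: the condition that the three new faces $p_i^*, p_j^*, p_k^*$ meeting an old face stay orthogonal to it while it stays planar forces the instantaneous motion of that old face to be that of a plane moving so as to remain the radical plane configuration of three de Sitter points; differentiating the polarity relation $\langle p_{ij}, p_i \rangle_{(3,1)} = 0$ then exhibits $q_{ij}$ as a function of $\dot p_i = q_i$ and $\dot p_j = q_j$. (One must check there are no further constraints and that the $q_i$ so obtained is unique — uniqueness follows because $P$ has at least three edges at each vertex spanning $\R^{3,1}$ together with $p_i$, so $q_i$ is overdetermined but consistently determined by the $q_{ij}$.)

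For the second assertion I would compute the first variation of the length of an old edge. The old edge of $P^\H_\tr$ on the line $p_ip_j$ has hyperbolic length equal to the distance between the two polar hyperplanes $p_i^*$ and $p_j^*$, which in Minkowski terms is $\operatorname{arccosh}\!\big(-\langle p_i, p_j\rangle_{(3,1)} / \sqrt{\langle p_i,p_i\rangle_{(3,1)}\langle p_j,p_j\rangle_{(3,1)}}\big)$; since $p_i, p_j \in \dS^3$ we have $\langle p_i, p_i\rangle_{(3,1)} = \langle p_j, p_j\rangle_{(3,1)} = 1$, so the length is a monotone function of $-\langle p_i, p_j\rangle_{(3,1)}$ alone. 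Differentiating and using that $\langle p_i, q_i\rangle_{(3,1)} = \langle p_j, q_j\rangle_{(3,1)} = 0$ (because $q_i \in T_{p_i}\dS^3$), the derivative of the length vanishes precisely when $\langle p_i, q_j\rangle_{(3,1)} + \langle q_i, p_j\rangle_{(3,1)} = 0$, and since the two vanishing terms can be added freely this is exactly $\langle p_i - p_j, q_i - q_j\rangle_{(3,1)} = 0$, which is \eqref{eqn:DeSitIsom}. Here I would invoke the remark following Definition \ref{dfn:DefTruncPol}: condition (\ref{item3}) on old edges automatically takes care of the new edges, so no extra equations are needed.

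The main obstacle I anticipate is the surjectivity part of the first assertion — making precise that conditions (\ref{item1}) and (\ref{item2}) really do force the deformation to come from moving the de Sitter points, with no hidden freedom and no hidden obstruction. This requires a clean argument that the combinatorial rigidity of ``planes through triples of points'' plus ``orthogonal truncating planes'' pins down the vertices $p_{ij}$ as smooth functions of the $p_i$, and that conversely every admissible first-order motion of the $p_{ij}$ arises this way. The computation in the second part, by contrast, is a routine differentiation once the length formula in terms of $\langle p_i, p_j\rangle_{(3,1)}$ is written down, and it parallels the Euclidean computation showing \eqref{eqn:InfFlexPol} is equivalent to \eqref{eqn:ScalProd}.
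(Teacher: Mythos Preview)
Your proposal is correct and, for the second assertion, essentially identical to the paper's argument: both identify the old-edge length with $\operatorname{arccosh}(-\langle p_i,p_j\rangle_{(3,1)})$, differentiate, and use $\langle p_i,q_i\rangle_{(3,1)}=\langle p_j,q_j\rangle_{(3,1)}=0$ to rewrite the vanishing derivative as \eqref{eqn:DeSitIsom}.

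For the first assertion, however, you take a more roundabout route than the paper and thereby create the very obstacle you flag. You propose to look at an old face, track the three adjacent new faces, differentiate $\langle p_{ij},p_i\rangle_{(3,1)}=0$ to solve for $q_{ij}$ in terms of putative $q_i,q_j$, and then argue that the resulting overdetermined system for $q_i$ is consistent. The paper instead constructs $q_i$ directly, in one step, by looking at a \emph{new} face rather than an old one: condition (\ref{item1}) says the new face at $p_i$ stays planar, so its plane has a well-defined dual point $p_i(t)\in\dS^3$ at each instant, and one simply sets $q_i=\dot p_i(0)$. Condition (\ref{item2}) then says the adjacent old faces remain orthogonal to this new face, i.e.\ their planes continue to pass through $p_i(t)$, which is exactly what ``induced by the motion of the $p_i$'' means. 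This dualize-the-new-face construction makes surjectivity immediate and eliminates any consistency check, so the ``hidden freedom / hidden obstruction'' worry you raise simply does not arise.
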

\begin{proof}
Consider a new face of $P^\H_\tr$. By condition (\ref{item1}) in Definition \ref{dfn:DefTruncPol}, the face remains planar during the deformation. The point dual to the plane of the face traces a trajectory $p_i(t)$ in the de Sitter space. Since, by condition (\ref{item2}), all incident old faces remain orthogonal to the new face, their planes pass through the point $p_i(t)$. Thus the vectors $q_i = \left.\frac{d}{dt}\right|_{t=0} p_i(t)$ induce the given infinitesimal deformation of $P^\H_\tr$.

Since the old edges of $P^\H_\tr$ are orthogonal to the new faces they join, we have for their lengths
$$
\dist(p_{ij}, p_{ji}) = \dist(p_i^*, p_j^*).
$$
On the other hand, it is well known that
$$
-\cosh \dist(p_i^*, p_j^*) = \langle p_i, p_j \rangle_{(3,1)}.
$$
Therefore condition (\ref{item2}):
$$
\left.\frac{d}{dt}\right|_{t=0} \dist(p_{ij}(t), p_{ji}(t)) = 0
$$
is equivalent to
$$
\left.\frac{d}{dt}\right|_{t=0} \langle p_i(t), p_j(t) \rangle_{(3,1)} = 0
$$
which due to $\langle p_i, q_i \rangle_{(3,1)} = \langle p_j, q_j \rangle_{(3,1)} = 0$ can be rewritten as
$$
\langle p_i - p_j, q_i - q_j \rangle_{(3,1)} = 0.
$$
\end{proof}

\begin{rem}
The points $p_{ij}(t)$, by Definition \ref{dfn:DefTruncPol}, move with constant velocities along geodesics. The corresponding points $p_i(t)$ don't have constant velocities in general.
\end{rem}

It is natural to call an infinitesimal deformation $\{q_i \in T_{p_i}\dS^3\}$ of $P^{\H\dS}$ \emph{isometric} if the condition \eqref{eqn:DeSitIsom} holds. As usual, we say that a deformation is trivial if it is a restriction of a Killing field on $\dS^3$. Thus we come to a notion of infinitesimal rigidity of hyperbolic-de Sitter polyhedra.

\begin{prp}
\label{prp:DeSitPog}
A hyperbolic-de Sitter polyhedron $P^{\H\dS}$ is infinitesimally flexible if and only if the corresponding Euclidean polyhedron $P$ is infinitesimally flexible.
\end{prp}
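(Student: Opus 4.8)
The plan is to follow the proof of Pogorelov's theorem~\ref{thm:Pogorelov}, replacing $\H^3$ by $\dS^3$: I would construct an explicit linear \emph{infinitesimal Pogorelov map} $\Phi$ from infinitesimal deformations of the Euclidean polyhedron $P$ to infinitesimal deformations of $P^{\H\dS}$, and show that $\Phi$ is a bijection carrying isometric deformations to isometric deformations and trivial ones to trivial ones. Work in the hyperboloid picture, with the Klein model sitting in the affine chart $\{x_0=1\}\cong\R^3$ of $\R^{3,1}$, so that a vertex $p_i$ of $P$ (which lies outside the unit ball, since it represents a de Sitter point) lifts to $\tilde p_i=\mu_i\,(1,p_i)\in\dS^3$, where $\mu_i=(|p_i|^2-1)^{-1/2}$. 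For a deformation vector $q\in\R^3=T_{p_i}\R^3$, set
\[
\Phi_i(q):=\mu_i\,(\langle p_i,q\rangle,\ q)\in\R^{3,1},
\]
the vector with $x_0$-coordinate $\mu_i\langle p_i,q\rangle$ and spatial part $\mu_i q$. An immediate computation gives $\langle\tilde p_i,\Phi_i(q)\rangle_{(3,1)}=0$, so $\Phi_i(q)\in T_{\tilde p_i}\dS^3$; since $\Phi_i$ is visibly injective and $T_{\tilde p_i}\dS^3$ is three-dimensional, $\Phi_i$ is a linear isomorphism onto $T_{\tilde p_i}\dS^3$. Let $\Phi=\bigoplus_i\Phi_i$.

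The computational core is the identity, valid for every edge $ij$ of $P$,
\[
\langle\tilde p_i-\tilde p_j,\ \Phi_i(q_i)-\Phi_j(q_j)\rangle_{(3,1)}=\mu_i\mu_j\,\langle p_i-p_j,\ q_i-q_j\rangle ,
\]
obtained by expanding the left side and using $\langle\tilde p_i,\Phi_i(q_i)\rangle_{(3,1)}=\langle\tilde p_j,\Phi_j(q_j)\rangle_{(3,1)}=0$ together with $\langle(1,p_i),(\langle p_j,q_j\rangle,q_j)\rangle_{(3,1)}=\langle p_i-p_j,q_j\rangle$. Since $\mu_i\mu_j\neq0$, this identity says edge by edge that $q=\{q_i\}$ satisfies the Euclidean isometry condition \eqref{eqn:ScalProd} if and only if $\Phi(q)$ satisfies the de Sitter isometry condition \eqref{eqn:DeSitIsom}. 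By Lemma~\ref{lem:DefDeSit}, $\Phi$ is therefore a linear bijection from the space of infinitesimal isometric deformations of $P$ onto that of $P^{\H\dS}$.

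To match trivial deformations, recall that a trivial Euclidean deformation is $q_i=Sp_i+b$ for some skew-symmetric $S$ and some $b\in\R^3$; since $\langle p_i,Sp_i\rangle=0$, one gets $\Phi_i(Sp_i+b)=\mu_i\,(\langle p_i,b\rangle,\ Sp_i+b)=A\tilde p_i$ with $A=\bigl(\begin{smallmatrix}0&b^{\mathsf T}\\ b&S\end{smallmatrix}\bigr)$. A one-line check shows $A^{\mathsf T}J+JA=0$ for $J=\operatorname{diag}(-1,1,1,1)$, i.e.\ $A\in\mathfrak{o}(3,1)$, so $\Phi(q)$ is the restriction to the vertices of the de Sitter Killing field $x\mapsto Ax$. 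Conversely every element of $\mathfrak{o}(3,1)$ has exactly this block shape, so running the computation backwards exhibits $\Phi$ as a bijection between trivial deformations of $P$ and trivial deformations of $P^{\H\dS}$. Combined with the previous paragraph, $\Phi$ restricts to a bijection between \emph{non}-trivial isometric deformations of the two polyhedra; in particular one is infinitesimally flexible if and only if the other is, which is the proposition.

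The only step that is not routine bilinear algebra in $\R^{3,1}$ is discovering the formula for $\Phi_i$. The naive candidate --- the velocity at $t=0$ of the lifted path $t\mapsto\widetilde{p_i+tq_i}$, i.e.\ the $\langle\cdot,\cdot\rangle_{(3,1)}$-orthogonal projection of $(0,q_i)$ onto $T_{\tilde p_i}\dS^3$ --- does \emph{not} respect \eqref{eqn:ScalProd}, consistently with the earlier remark that the Pogorelov map is not the identity in the Klein model. The correct normalization (the scalar $\mu_i$ multiplying $(\langle p_i,q\rangle,q)$, rather than an orthogonal projection) is pinned down by requiring simultaneously the isometry identity above and the compatibility $\Phi_i(Sp_i+b)=A\tilde p_i$ with Killing fields, after which every verification is a short explicit calculation.
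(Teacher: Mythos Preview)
Your proof is correct and carries out precisely the second of the two routes the paper indicates but does not detail: it exhibits the explicit infinitesimal Pogorelov map (the formula from \cite{Sch05}) and verifies that it matches isometric and trivial deformations on both sides. The key identity and the $\mathfrak{o}(3,1)$ block computation both check, so there is nothing to add.
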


The theorem can be proved either by repeating the arguments from \cite[Section 4.1]{Izm09} or by exhibiting a formula that relates infinitesimal isometric deformations of $P^{\H\dS}$ to those of $P$, \cite[Section 1]{Sch05}.

\begin{cor}
\label{cor:FlexTruncOcta}
There exists a truncated hyperideal octahedron infinitesimally flexible in the sense of Definition \ref{dfn:DefTruncPol}.
\end{cor}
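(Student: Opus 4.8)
The plan is to combine the two previously established facts. First, recall from Lemma~\ref{lem:InfFlex} that Sch\"onhardt's twisted octahedron from Example~\ref{exl:TwistOcta} is an infinitesimally flexible Euclidean polyhedron with triangular faces. Next, I need to place a copy of this polyhedron into a ball $B$ in the configuration required at the beginning of Section~\ref{subsec:InfDefHyper}: the ball must be small enough (or positioned appropriately) so that all six vertices of the twisted octahedron lie \emph{outside} $B$, while every edge still meets the interior of $B$. Since the twisted octahedron is combinatorially an octahedron with all edges genuine segments, such a ball exists — for instance, take $B$ to be a small ball centered at the common center $l \cap ABC$; scaling $B$ down pushes the vertices outside while a suitable radius keeps all twelve edges crossing $\inn B$. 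One should check that this scaling does not destroy the flexibility, but flexibility is a property of the combinatorial polyhedron together with its (Euclidean) vertex positions, which are unchanged.

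With the polyhedron $P$ so situated inside the Klein model $B = \H^3$, it determines a truncated hyperideal octahedron $P^\H_\tr$ in the sense of Definition~\ref{dfn:DefTruncPol}, together with a hyperbolic--de Sitter polyhedron $P^{\H\dS}$ (the vertices of $P$ being now points of $\dS^3$). Proposition~\ref{prp:DeSitPog} says $P^{\H\dS}$ is infinitesimally flexible precisely because $P$ is. Finally, Lemma~\ref{lem:DefDeSit} identifies infinitesimal isometric deformations of $P^{\H\dS}$ — those satisfying \eqref{eqn:DeSitIsom} — with infinitesimal isometric deformations of $P^\H_\tr$ in the class of truncated hyperideal polyhedra, and this identification carries Killing fields to Killing fields, hence non-trivial deformations to non-trivial ones. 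Therefore $P^\H_\tr$ is infinitesimally flexible in the sense of Definition~\ref{dfn:DefTruncPol}.

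The only real point requiring care — and the step I would expect to be the main obstacle — is verifying that the twisted octahedron admits a position relative to some ball $B$ meeting the geometric hypotheses of Section~\ref{subsec:InfDefHyper} (all vertices outside $B$, all edges crossing $\inn B$) \emph{simultaneously with} the combinatorial hypothesis that the truncation produces the expected right-angled hexagonal faces, i.e.\ that for each edge $p_ip_j$ the polar planes $p_i^*, p_j^*$ are disjoint. But disjointness of $p_i^*$ and $p_j^*$ is exactly equivalent to $p_ip_j$ meeting $\inn B$, as noted in the excerpt, so no extra condition is needed. A clean way to present this is: choose coordinates so that the center of the twisted octahedron is the origin, let $B$ be the ball of radius $r$ about the origin, and observe that for $r$ smaller than the distance from the origin to the nearest vertex but larger than the distance from the origin to the farthest edge-line, all requirements hold; such $r$ exists because the origin lies in the interior of $P$ and in particular strictly inside every edge's perpendicular "slab." I would state this as a short paragraph rather than grind through explicit inequalities.

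\begin{proof}
Place the polyhedron $P$ of Example~\ref{exl:TwistOcta} in $\R^3$ with the axis $l$ vertical and the common center of $ABC$ and $A'B'C'$ at the origin $O$. Since $O$ lies in the interior of $P$, it lies strictly on the interior side of every edge of $P$; hence there is $\rho > 0$ so that the ball $B$ of radius $\rho$ about $O$ meets the interior of every edge of $P$, and, choosing $\rho$ smaller than $\min_i |Op_i|$, all six vertices $p_i$ of $P$ lie outside $B$. Regard $B$ as the Klein model of $\H^3$. Then $P \cap \inn B$ is the image of a hyperideal octahedron, and truncating its ends by the polar planes $p_i^*$ yields a truncated hyperideal octahedron $P^\H_\tr$ as in Section~\ref{subsec:InfDefHyper}; for each edge $p_ip_j$, the fact that $p_ip_j$ meets $\inn B$ guarantees that $p_i^*$ and $p_j^*$ are disjoint, so $P^\H_\tr$ is combinatorially $P$ with the vertices truncated, with old faces right-angled hexagons.

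By Lemma~\ref{lem:InfFlex}, $P$ is an infinitesimally flexible Euclidean polyhedron with triangular faces. By Proposition~\ref{prp:DeSitPog}, the associated hyperbolic--de Sitter polyhedron $P^{\H\dS}$ is infinitesimally flexible: it admits a deformation $\{q_i \in T_{p_i}\dS^3\}$ satisfying \eqref{eqn:DeSitIsom} for all edges that is not the restriction of a Killing field of $\dS^3$. By Lemma~\ref{lem:DefDeSit}, this deformation induces an infinitesimal isometric deformation of $P^\H_\tr$ in the class of truncated hyperideal polyhedra, and since the correspondence sends restrictions of Killing fields to restrictions of Killing fields, the induced deformation of $P^\H_\tr$ is again non-trivial. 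Hence $P^\H_\tr$ is infinitesimally flexible in the sense of Definition~\ref{dfn:DefTruncPol}.
\end{proof}
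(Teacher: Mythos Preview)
Your proof is correct and follows the same approach as the paper: take Sch\"onhardt's twisted octahedron, position a Klein ball so that the vertices are hyperideal while the edges remain real, and then invoke Proposition~\ref{prp:DeSitPog} together with Lemma~\ref{lem:DefDeSit}. The paper's version of the ball-positioning step is a bit slicker: since the symmetric twisted octahedron is inscribed in a sphere, one simply takes a concentric ball of slightly smaller radius; this immediately pushes all vertices outside while every edge (whose midpoint lies strictly inside the circumscribed sphere) still meets the interior. Your phrasing ``the common center of $ABC$ and $A'B'C'$'' is inaccurate (the two triangles lie in parallel planes and have distinct centers on the axis $l$; you mean their midpoint), and the clause ``$O$ lies strictly on the interior side of every edge'' is not meaningful in $\R^3$ and does not by itself guarantee a common $\rho$---what actually makes the interval $(\max_e d(O,e),\,\min_i |Op_i|)$ non-empty is that all six vertices are equidistant from $O$, which you should state.
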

\begin{proof}
Sch\"onhardt's twisted octahedron (Example \ref{exl:TwistOcta}) can be inscribed in a ball. By taking a ball of a slightly smaller radius and with the same center, we obtain a hyperideal polyhedron. Its truncation is infinitesimally flexible due to Lemma \ref{lem:DefDeSit} and Proposition \ref{prp:DeSitPog}.
\end{proof}

\subsection{Doubling the double}
\label{subsec:DD}
Let $P^\H_\tr$ be a truncated hyperideal polyhedron. Take its double $D$ along old faces. Then $D$ is a cone-manifold with geodesic boundary: old edges of $P^\H_\tr$ give rise to singular segments with endpoints on the boundary, and the boundary is geodesic because all angles between old and new faces are $\frac{\pi}{2}$. It follows that the double $M$ of $D$ is a cone-manifold without vertices. Components of the singular locus of $M$ are in one-to-one correspondence with old edges of $P^\H_\tr$, with cone angles twice the dihedral angles.

\begin{prp}
If the polyhedron $P^\H_\tr$ is infinitesimally flexible in the class of truncated hyperideal polyhedra, then $M$ is an infinitesimally flexible cone-manifold.
\end{prp}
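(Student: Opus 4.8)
The plan is to follow the pattern of Proposition~\ref{prp:FlexDouble}. Write $M$ as a union of four isometric copies $P^{(1)},P^{(2)},P^{(3)},P^{(4)}$ of $P^\H_\tr$: the double $D$ of $P^\H_\tr$ along its old faces is $P^{(1)}\cup P^{(2)}$, the mirror copy $D'$ of $D$ is $P^{(3)}\cup P^{(4)}$ (where $P^{(3)},P^{(4)}$ are the mirrors of $P^{(1)},P^{(2)}$), and $M=D\cup D'$ is obtained by gluing along the totally geodesic boundary $\partial D$, which consists of the new faces. Thus $P^{(1)}$ is glued to $P^{(2)}$ and $P^{(3)}$ to $P^{(4)}$ along old faces, while $P^{(1)}$ is glued to $P^{(3)}$ and $P^{(2)}$ to $P^{(4)}$ along new faces. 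Fixing a non-trivial infinitesimal isometric deformation $q$ of $P^\H_\tr$ in the class of truncated hyperideal polyhedra (which exists by hypothesis; note that $-q$ is another such deformation, the defining conditions being linear in $q$), I would deform $P^{(1)}$ and $P^{(3)}$ by $q$ and $P^{(2)}$ and $P^{(4)}$ by $-q$, and show that this defines a non-trivial infinitesimal deformation of the cone-manifold $M$ that is isometric off the singular locus.

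The first thing to check is that the gluing of $P^{(1)}$ to $P^{(2)}$ along an old face $G$ stays consistent, i.e. that the two sides induce the same first-order change of the intrinsic geometry of $G$ (the same check then applies to $P^{(3)},P^{(4)}$). The reflection identifying the two copies fixes $G$ pointwise, so it is enough to know that $q$ induces only a trivial (Killing) infinitesimal deformation of $G$: the two sides then contribute a Killing field and its negative, which differ by an infinitesimal isometry of $G$, and that is all that is needed to re-glue. To get this I would use that $G$ is a right-angled hexagon whose three ``long'' edges are the old edges, of lengths $\dist(p_i^*,p_j^*)$. By Lemma~\ref{lem:DefDeSit} the deformation $q$ comes from a deformation of $P^{\H\dS}$, so each plane $p_i^*$ stays polar to the moving hyperideal vertex $p_i$ and hence orthogonal to the old edges at $p_i$; thus $G$ stays right-angled. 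Condition~(\ref{item3}), via $-\cosh\dist(p_i^*,p_j^*)=\langle p_i,p_j\rangle_{(3,1)}$, keeps the three old-edge lengths constant in the first order, and a right-angled hexagon is determined up to isometry by them, so the intrinsic geometry of $G$ is constant to first order and the induced deformation of $G$ is Killing. Conditions~(\ref{item1}) and~(\ref{item2}) keep the new faces planar and orthogonal to the adjacent old faces, so the boundary $\partial D$ stays totally geodesic; and the cone angle along the singular arc of $D$ that comes from an old edge $e$, being the sum of the dihedral angles of $P^{(1)}$ and $P^{(2)}$ at $e$, varies by $\delta\theta_e+(-\delta\theta_e)=0$.

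Next I would pass from $D$ to $M$ by doubling along $\partial D$. Along the new faces the gluings $P^{(1)}\leftrightarrow P^{(3)}$ and $P^{(2)}\leftrightarrow P^{(4)}$ identify copies carrying the same deformation ($q$ with $q$, resp.\ $-q$ with $-q$), so the corresponding new faces undergo identical first-order changes and the gluings stay consistent with no further input. The singular locus of $M$ is the set of closed curves obtained by joining each singular arc of $D$ with its mirror; the cone angle along each such curve is preserved by the previous step, and every new edge carries dihedral angle $\frac{\pi}{2}$ in each of the four copies (condition~(\ref{item2})), hence total angle $2\pi$ in $M$, so it remains a regular edge and contributes no new component of the singular locus. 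Therefore all cone angles of $M$ are preserved to first order. Finally, the deformation is non-trivial because it restricts on the copy $P^{(1)}\subset M$, which develops isometrically onto $P^\H_\tr$, to the deformation $q$: a trivial infinitesimal deformation of a hyperbolic cone-manifold changes the developing map only by an infinitesimal ambient isometry, hence would restrict to $P^{(1)}$ as a Killing field, whereas $q$ is not one.

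The step I expect to be the main obstacle is exactly the compatibility of the sign reversal with the face identifications. Along the new faces it is immediate, since the relevant reflections act as the identity on the new-face planes and the two sides carry the same sign; along the old faces, where the two sides carry opposite signs, it genuinely requires the infinitesimal rigidity of a right-angled hexagon with prescribed old-edge lengths, together with the ``infinitesimally flat implies flat, infinitesimally right implies right'' principle already invoked in the discussion of Definition~\ref{dfn:DefTruncPol}. Once that rigidity is in place, the remaining work---keeping the four copies, the two families of reflections, and the signs mutually consistent, and reading off the cone angles---is routine.
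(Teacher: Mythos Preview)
Your proposal is correct and follows essentially the same approach as the paper: assign $q$ and $-q$ to the two halves of $D$, then put the same deformation on the second copy of $D$ so that the (non-isometric) deformations of the new faces match across the second doubling. Your write-up is considerably more detailed than the paper's---in particular, you spell out the right-angled-hexagon rigidity that makes the $\pm q$ gluing along old faces consistent, and you give an explicit non-triviality argument via restriction to one copy of $P^\H_\tr$, points the paper leaves implicit.
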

\begin{proof}
Deform one half of $D$ according to a non-trivial infinitesimal deformation $q$ of $P^\H_\tr$, and the other half according to the opposite deformation $-q$. This gives an infinitesimal deformation of $D$ that preserves the angles around the segments of the singular locus, in the first order. The boundary of $D$ undergoes a non-isometric infinitesimal deformation but remains geodesic. Thus if we take the same infinitesimal deformation on an isometric copy of $D$, then together they yield an infinitesimal deformation of $M$ that preserves the angles around the components of the singular locus. Note that the lengths of the components of the singular locus are also preserved.
\end{proof}

If we take for $P^\H_\tr$ an infinitesimally flexible truncated octahedron that exists by Corollary \ref{cor:FlexTruncOcta}, then the singular locus of the manifold $M$ has 12 components. For a symmetric truncated octahedron described in the proof of Corollary \ref{cor:FlexTruncOcta}, the gluing pattern can be modified so that to obtain an infinitesimally flexible manifold with less components of the singular locus.

A symmetric infinitesimally flexible truncated octahedron has three types of dihedral angles, see Figure \ref{fig:TruncOcta}. Letters on the new faces correspond to the hyperideal vertices, see Figure \ref{fig:TwistOcta}. Note that all new faces are equal, equally oriented, and undergo the same infinitesimal deformation when $P^\H_\tr$ is isometrically infinitesimally deformed.

\begin{figure}[ht]
\begin{center}
\input{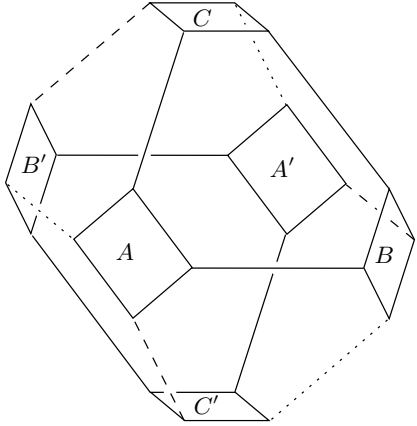}
\end{center}
\caption{A schematic drawing of truncated twisted octahedron. Edges with equal dihedral angles are drawn with lines of same type.}
\label{fig:TruncOcta}
\end{figure}

\begin{exl}
\label{exl:3comp}
Instead of doubling $D$, identify its boundary components pairwise. Namely, the double of the face $A$ is glued to the double of the face $A'$ and so on, and we take care that a face of $P^\H_\tr$ is each time glued to a face of $P^\H_\tr$ but not to a face of its double. Then the gluing is consistent with an infinitesimal deformation of $D$ that preserves the angles around the singular locus.

The same gluing pattern can be described as first identifying antipodal pairs of new faces of $P^\H_\tr$ and then doubling the result. After the first step we obtain a non-orientable infinitesimally flexible manifold with polyhedral boundary. The boundary has no vertices and exactly three edges as can be seen on Figure \ref{fig:TruncOcta} by tracing the identifications of edges' endpoints.
\end{exl}

\begin{exl}
\label{exl:4comp}
The two-fold orientable cover of the previous example can be described as follows. We take two copies of $D$ and glue them along the antipodal pairs of boundary components. For each antipodal pair, there are two possible gluings, and we choose them consistently, so that each copy of $P^\H_\tr$ is glued along its new faces to only one of the other copies. This ensures that infinitesimal isometric deformations $\pm q$ on all four copies can be chosen so that they fit together and leave the cone angles unchanged in the first order.

By tracing the gluing with the help of Figure \ref{fig:TruncOcta}, it can be shown that the singular locus consists of four components, and only one of them has cone angle bigger than $2\pi$.
\end{exl}

Examples \ref{exl:3comp} and \ref{exl:4comp} prove Theorem \ref{thm:2}.

\subsection{An example with all cone angles larger than $2\pi$}
\label{subsec:LargeAngles}
At the beginning of Section \ref{subsec:DD} we described a cone-manifold $M$ glued from four copies of a truncated hyperideal polyhedron $P^\H_\tr$. First we double $P^\H_\tr$ along old faces to obtain a cone-manifold with boundary $D$, then we double $D$. To obtain a cone-manifold with all angles larger than $2\pi$, we will construct a cover of $D$ branched over the singular segments with cone angles less than $2\pi$.

\begin{exl}
\label{exl:NegCurv}
By Lemma \ref{lem:AngleEst}, there exists an infinitesimally flexible truncated hyperideal octahedron $P^\H_\tr$ with all angles larger than $\frac{\pi}{7}$. Let $D$ be the double of $P^\H_\tr$ along old faces, and let $\Sigma_+ \subset D$ be the union of singular segments with cone angles less than $2\pi$. The space $D \setminus \Sigma_+$ is homeomorphic to the complement of the skeleton of a triangular prism in $\Sph^3$. By Lemma \ref{lem:7Fold}, there exists a seven-fold cover
$$
\widehat{D \setminus \Sigma_+} \to D \setminus \Sigma_+
$$
such that links of all components of $\Sigma_+$ are covered non-trivially. Then the completion $D' = \widehat{D \setminus \Sigma_+} \cup \Sigma_+$ is a cone-manifold with boundary and with cone angles around the components of $\Sigma_+$ seven times larger than those in $D$. Since all cone angles in $D$ are larger than $\frac{2\pi}{7}$, all cone angles in $D'$ are larger than $2\pi$.

Let $M'$ be the double of $D'$. Then $M'$ is a cone-manifold without boundary and with all cone angles larger than $2\pi$. Since $M'$ is a cover of $M$ branched over a subset of the singular locus, infinitesimal flexibility of $M$ implies infinitesimal flexibility of $M'$.
\end{exl}

\begin{lem}
\label{lem:AngleEst}
For every $\epsilon > 0$, there exists an infinitesimally flexible truncated hyperideal octahedron with all dihedral angles larger than $\frac{\pi}{6} - \epsilon$.
\end{lem}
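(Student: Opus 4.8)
The plan is to take a symmetric infinitesimally flexible truncated hyperideal octahedron of the kind produced in Corollary~\ref{cor:FlexTruncOcta} and to show that, by moving the truncating ball further out toward the vertices of Sch\"onhardt's octahedron, the dihedral angles of the truncation can be made as close as we like to the angles of a \emph{right-angled ideal} configuration, which will turn out to have all old dihedral angles at least $\frac{\pi}{6}$. First I would recall that the new dihedral angles are always exactly $\frac{\pi}{2}$, so only the \emph{old} edges need to be controlled. For a truncated hyperideal polyhedron, as the hyperideal vertices recede to infinity (i.e.\ as the vertices of $P$ approach the sphere $\partial B$ of the Klein model), the truncating planes $p_i^*$ degenerate and the truncated polyhedron converges to a polyhedron all of whose old vertices are \emph{ideal}; in this limit the old dihedral angles are those of a hyperbolic ideal octahedron with the combinatorics of Sch\"onhardt's twist. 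I would compute (or rather invoke the standard fact about ideal hyperbolic polyhedra via their Euclidean link polygons) that, for the threefold-symmetric Sch\"onhardt configuration, the three types of old dihedral angles in this ideal limit are bounded below by $\frac{\pi}{6}$: the vertex links are Euclidean triangles and quadrilaterals whose angles are pinned by the $\frac{2\pi/3}{}$ rotational symmetry, and a direct trigonometric check gives the bound.

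The key steps, in order, are: (1) fix the threefold-symmetric placement of Sch\"onhardt's twisted octahedron $P$ inside a ball $B$, centered so that the rotation axis $l$ passes through the center of $B$ and the symmetry group of $P$ acts on $B$; (2) scale $P$ up toward $\partial B$ by a parameter $s\to 1$, keeping the combinatorial type and the symmetry; by Corollary~\ref{cor:FlexTruncOcta} (whose proof only uses that $P$ can be inscribed in a ball) each truncation $P^\H_\tr(s)$ is infinitesimally flexible; (3) identify the limiting object as the ideal hyperbolic octahedron $P^\H_\infty$ with the Sch\"onhardt combinatorics, and note that old dihedral angles depend continuously on $s$ and converge to those of $P^\H_\infty$; (4) compute the old dihedral angles of $P^\H_\infty$ from the Euclidean geometry of its vertex links and check they all exceed $\frac{\pi}{6}$; (5) conclude that for $s$ sufficiently close to $1$ all old dihedral angles of $P^\H_\tr(s)$ exceed $\frac{\pi}{6}-\epsilon$, and the new ones are $\frac{\pi}{2}>\frac{\pi}{6}-\epsilon$. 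Step (4) can be streamlined: rather than the full ideal octahedron one only needs the three \emph{lower} bounds, and by the Blaschke--Liebmann symmetry (Theorem~\ref{thm:BlaLie}) the flexible configuration is rigidly constrained, so the limiting angles are explicit functions of a single parameter.

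The main obstacle I anticipate is step (3)--(4): one must make sure that the truncated polyhedron really does converge, as a metric object with well-defined dihedral angles, to an \emph{honest} ideal polyhedron (and not to something degenerate — e.g.\ one must check that the hyperideal planes $p_i^*$ remain pairwise disjoint right up to the limit, which holds precisely because the edges $p_ip_j$ keep meeting $\inn B$ for all $s<1$), and then one must actually pin down the limiting dihedral angles. The cleanest route is to observe that an ideal octahedron in $\H^3$ is determined by its dihedral angles via Rivin's theorem, and that the link condition at each ideal vertex forces, for the threefold-symmetric Sch\"onhardt pattern, angles of the form $(\alpha,\alpha,\alpha,\beta,\beta,\beta)$ with $3\alpha+3\beta$ constrained by the Euclidean link quadrilateral; solving the resulting elementary system shows every $\alpha,\beta$ is at least $\frac{\pi}{6}$ (with equality only in the fully regular degenerate case, which our flexible family avoids). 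Thus for all small $\epsilon$ the asserted family exists.

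(As a side remark, one could alternatively sidestep the explicit ideal computation by a compactness/monotonicity argument: among truncated Sch\"onhardt octahedra the infimum of the minimal old dihedral angle is attained in the ideal limit, and the ideal limit is the one case where a closed-form answer is available; this is essentially the reasoning above, organized as a limit rather than an estimate.)
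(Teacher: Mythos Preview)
Your broad strategy---pass to the ideal limit, control the dihedral angles there, then perturb slightly to hyperideal---is the same as the paper's. The gap is in step~(4), and it is the heart of the matter.

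You fix a Euclidean Sch\"onhardt octahedron and scale it by $s\to 1$ toward the Klein sphere. That limit is a \emph{single} ideal octahedron, and you then assert that its dihedral angles are all at least $\frac{\pi}{6}$. You do not prove this; the ``direct trigonometric check'' and the ``elementary system'' are not carried out, and there is no reason to believe the bound holds for an arbitrary member of the ideal family. (Your description of the angle pattern as $(\alpha,\alpha,\alpha,\beta,\beta,\beta)$ is also off: the symmetric Sch\"onhardt octahedron has \emph{three} orbits of edges under its symmetry group, not two, and every vertex link is a quadrilateral, never a triangle.) The paper does \emph{not} claim the $\frac{\pi}{6}$ bound for a generic ideal twisted octahedron.

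What the paper actually does is vary a \emph{second} parameter, inside the family of ideal octahedra themselves. In the Poincar\'e half-space model the six ideal vertices are the vertices of two concentric regular triangles with pairwise orthogonal sides; the free parameter is the ratio of their sizes. As this ratio tends to infinity the four dihedral angles at a vertex tend to $\frac{\pi}{6},\frac{\pi}{3},\frac{\pi}{3},\frac{7\pi}{6}$, read off from the angles between the boundary circles of the faces. Only in this limit does the minimum reach $\frac{\pi}{6}$, so one takes the ratio large enough to beat $\frac{\pi}{6}-\epsilon$ and then pushes the vertices slightly outward to make the polyhedron hyperideal. Your scaling parameter $s$ does not touch this degree of freedom; to repair your argument you would need to let the \emph{shape} of the Sch\"onhardt octahedron vary (equivalently, the ratio of the triangle sizes in the half-space picture) and analyze the limit there, which is exactly the paper's move.
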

\begin{proof}
It suffices to prove this for ideal octahedra, because slightly pushing the vertices outwards changes the dihedral angles only a little. Consider an ideal octahedron $P_\id$ obtained from Example \ref{exl:TwistOcta}. In the Poincar\'{e} half-space model, vertices of $P_\id$ are the vertices of two concentric regular triangles, with pairwise orthogonal sides, see Figure \ref{fig:IdOcta}, left. A face of $P_\id$ intersects the boundary plane of the model in a circle, and the dihedral angle between two faces is equal to the angle between corresponding circles. On Figure \ref{fig:IdOcta}, the four angles adjacent to the vertex $A$ are marked.

\begin{figure}[ht]
\begin{center}
\input{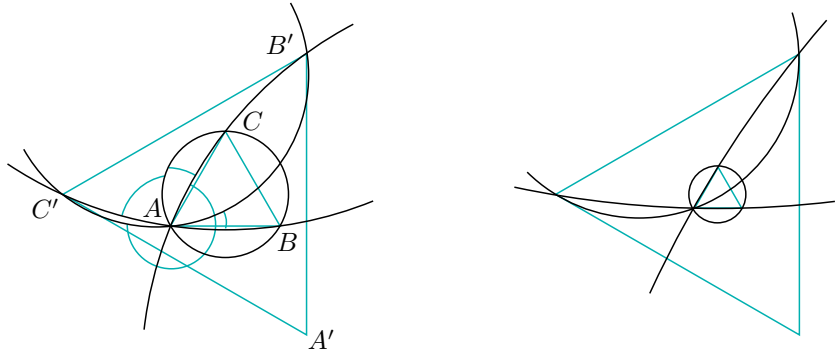}
\end{center}
\caption{Dihedral angles of an ideal twisted octahedron.}
\label{fig:IdOcta}
\end{figure}

The ideal polyhedron $P_\id$ depends on one parameter, the relative size of the triangles. As this parameter tends to infinity, the dihedral angles of $P_\id$ tend to $\frac{\pi}{6}$, $\frac{\pi}{3}$, $\frac{\pi}{3}$, and $\frac{7\pi}{6}$, see Figure \ref{fig:IdOcta}, right. Thus, for a sufficiently large value of the parameter, all angles are larger than $\frac{\pi}{6} - \epsilon$. Lemma follows.
\end{proof}

\begin{lem}
\label{lem:7Fold}
Let $\Gamma \subset \Sph^3$ be the skeleton of a triangular prism. There exists a seven-fold cover of $\Sph^3 \setminus \Gamma$ such that the link of every edge of $\Gamma$ is covered non-trivially (that is, the preimage of a meridional curve of every edge has one component).
\end{lem}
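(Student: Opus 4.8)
The plan is to realize the cover as a cyclic cover of order $7$. Since the regular representation of $\Z/7$ on itself sends a generator to a $7$-cycle, it suffices to produce a surjective homomorphism $\rho \co \pi_1(\Sph^3 \setminus \Gamma) \to \Z/7$ that sends the meridian $m_e$ of every edge $e$ of $\Gamma$ to a nonzero element. Indeed, the associated connected $7$-sheeted cover is regular with deck group $\Z/7$, and the preimage of $m_e$ has $7/\operatorname{ord}(\rho(m_e))$ components, which equals $1$ exactly when $\rho(m_e)\neq 0$ (here we use that $7$ is prime). Surjectivity of $\rho$, needed for connectedness of the cover, is automatic as soon as one value $\rho(m_e)$ is nonzero.

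Because $\Z/7$ is abelian, $\rho$ factors through $H_1(\Sph^3 \setminus \Gamma)$, and I would first describe this group combinatorially. By Alexander duality, or directly from the Wirtinger presentation of the spatial graph $\Gamma$, $H_1(\Sph^3 \setminus \Gamma)$ is generated by the meridians $\mu_e$, one for each edge, subject to one relation per vertex $v$, namely $\sum_{e \ni v} \varepsilon_{v,e}\,\mu_e = 0$, where, after fixing an orientation of each edge, $\varepsilon_{v,e}=+1$ or $-1$ according to whether $e$ points out of or into $v$. Hence a homomorphism $\rho \co H_1(\Sph^3 \setminus \Gamma) \to \Z/7$, written $\rho(\mu_e) = \phi(e)$, is precisely a $\Z/7$-valued flow on $\Gamma$: a function $\phi$ on the oriented edges obeying Kirchhoff's law $\sum_{e \ni v} \varepsilon_{v,e}\,\phi(e) = 0$ at every vertex. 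The condition that $\rho$ send every meridian to a nonzero element is exactly that $\phi$ be a \emph{nowhere-zero} $\Z/7$-flow.

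It then remains to exhibit a nowhere-zero $\Z/7$-flow on the $1$-skeleton $\Gamma$ of the triangular prism. This can be done by hand: label the vertices $a_1,a_2,a_3$ and $b_1,b_2,b_3$ along the two triangles, orient the triangle edges cyclically ($a_i \to a_{i+1}$ and $b_i \to b_{i+1}$, indices modulo $3$) and the three connecting edges as $a_i \to b_i$, and assign the values $2,3,5$ to the top triangle edges, $6,5,3$ to the bottom triangle edges, and $3,6,5$ to the connecting edges. One checks directly that Kirchhoff's law holds at all six vertices modulo $7$ and that all nine values are nonzero; conceptually this flow is the reduction modulo $7$ of the $1$-cycle $\partial F_1 + 2\,\partial F_2 + 4\,\partial F_3 + \tau$, where $F_1,F_2,F_3$ are the three square faces and $\tau$ a triangular face. (Alternatively, $\Gamma$ is bridgeless, so Seymour's six-flow theorem already provides a nowhere-zero $\Z/7$-flow.) Taking $\rho$ to be the homomorphism determined by this flow and passing to the associated $\Z/7$-cover produces the desired cover.

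The only point that needs a little care is the middle step: verifying that the vertex relations among the meridians really translate homomorphisms to $\Z/7$ into $\Z/7$-flows with the correct signs. Once this identification is in place the remaining construction is elementary, so I would regard the homological bookkeeping, rather than exhibiting the flow, as the main (and only mild) obstacle.
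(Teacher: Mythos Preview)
Your proof is correct and its core is the same as the paper's: both construct a homomorphism $\pi_1(\Sph^3\setminus\Gamma)\to\Z/7$ sending every meridian to a nonzero element, and then take the associated cyclic cover. The paper does this by hand on the level of $\pi_1$, observing that the group is free on the four meridians $a_1,\dots,a_4$ of a $4$-cycle in $\Gamma$ (the remaining five meridians being length-two words in these), and sending $a_1,a_2,a_4\mapsto 1$, $a_3\mapsto 2$; all nine meridians then land in $\{1,2,3\}$. You instead pass to $H_1(\Sph^3\setminus\Gamma)$ and recast the problem as finding a nowhere-zero $\Z/7$-flow on the prism, which you exhibit explicitly (and your flow checks out at all six vertices). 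The flow viewpoint is a tidier conceptual wrapper---it makes the vertex relations transparent and lets you invoke Seymour's six-flow theorem to handle any bridgeless $\Gamma$ uniformly---while the paper's direct computation is shorter for this particular graph and avoids the homological bookkeeping you flag as the only delicate point.
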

\begin{proof}
It suffices to construct a homomorphism
$$
\phi \co \pi_1(\Sph^3 \setminus \Gamma) \to \Z_7
$$
that maps every meridional element of $\pi_1(\Sph^2 \setminus \Gamma)$ to a non-zero element of~$\Z_7$. The fundamental group of $\Sph^3 \setminus \Gamma$ is freely generated by the meridians $a_1, \ldots, a_4$ of a four-cycle in $\Gamma$, all the other meridians being words of length two, see Figure \ref{fig:FundGroup}. Let $\phi$ be a homomorphism defined by sending $a_1$, $a_2$, and $a_4$ to $1 \in \Z_7$, and $a_3$ to $2 \in \Z_7$. Then the images of all meridians are contained in the set $\{1,2,3\}$. Lemma is proved.
\end{proof}

\begin{figure}[ht]
\begin{center}
\input{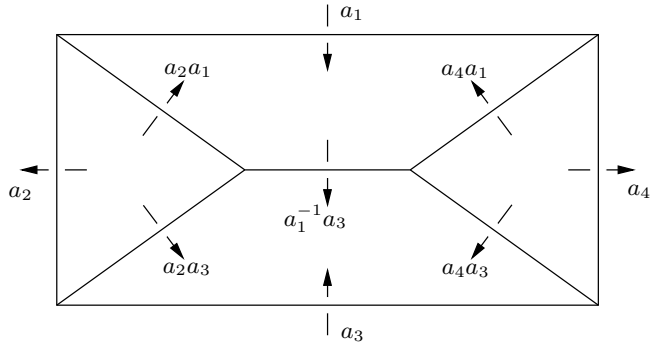}
\end{center}
\caption{Meridional elements of $\pi_1(\Sph^3 \setminus \Gamma)$.}
\label{fig:FundGroup}
\end{figure}

\section{Pairs of non-isometric cone-manifolds with same cone angles}
\subsection{Deaveraging}
\label{subsec:Deave}
\begin{lem}[Pogorelov]
\label{lem:Deave}
Let $P$ be a Euclidean polyhedron and let $q$ be an infinitesimal isometric deformation of $P$, see Definition \ref{dfn:InfFlexPol}. Denote by $P_t$ a polyhedron with the same combinatorics as $P$ and with vertices $p_i(t) = p_i + tq_i$ instead of $p_i$. Then, for all $t>0$, the pairs of corresponding edges of $P_t$ and $P_{-t}$ have equal lengths. Besides, $q$ is non-trivial if and only if polyhedra $P_t$ and $P_{-t}$ are non-congruent.
\end{lem}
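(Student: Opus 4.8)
```latex
\textbf{Proof proposal.}
The plan is to compute the squared edge lengths of $P_t$ and $P_{-t}$ directly and observe that the linear-in-$t$ terms cancel thanks to \eqref{eqn:ScalProd}, and then to establish the congruence dichotomy via a rigidity argument for the ``deaveraged'' family. First I would write, for an edge $p_ip_j$ of $P$,
$$
\|p_i(t) - p_j(t)\|^2 = \|(p_i - p_j) + t(q_i - q_j)\|^2 = \|p_i - p_j\|^2 + 2t\langle p_i - p_j, q_i - q_j\rangle + t^2\|q_i - q_j\|^2.
$$
By the reformulation \eqref{eqn:ScalProd} of Definition \ref{dfn:InfFlexPol}, the middle term vanishes, so the expression is \emph{even} in $t$; replacing $t$ by $-t$ leaves it unchanged. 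Hence corresponding edges of $P_t$ and $P_{-t}$ have equal lengths for every $t$, which is the first assertion.

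For the second assertion, one direction is immediate: if $q$ is trivial, say $q_i = \xi(p_i)$ for a Killing field $\xi$ of $\R^3$, write $\xi(x) = Sx + v$ with $S$ skew-symmetric. Then $p_i(t) = p_i + t(Sp_i + v) = (\id + tS)p_i + tv$, and for small $t$ the matrix $\id + tS$ is a positive multiple of a rotation only in the limit --- more carefully, the map $x \mapsto (\id + tS)x + tv$ is an \emph{affine} map whose linear part $\id + tS$ has the property that $(\id + tS)^{\!\top}(\id + tS) = \id + t^2 S^{\!\top}S$, which is again even in $t$; thus $(\id + tS)$ and $(\id - tS)$ differ by the orthogonal factor in their polar decompositions, i.e. there is $R_t \in O(3)$ with $\id - tS = R_t(\id + tS)$. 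Consequently $P_{-t} = R_t P_t - R_t(\text{translation}) + (\text{translation})$, so $P_t$ and $P_{-t}$ are congruent for all $t$. For the converse, suppose $P_t$ and $P_{-t}$ are congruent for all $t>0$ (or even for a sequence $t_k \to 0$); let $g_t$ be an isometry of $\R^3$ carrying $P_t$ to $P_{-t}$ vertex-to-vertex. After composing with a fixed isometry we may normalize so that $g_t \to \id$ as $t \to 0$ along a subsequence (the $g_t$ lie in the compact-plus-translation group and the vertex sets converge, so some subsequential limit fixes $P_0 = P$, and we absorb it). Writing $g_t(x) = A_t x + b_t$ with $A_t \to \id$, $b_t \to 0$, the relation $g_t(p_i + tq_i) = p_i - tq_i$ gives, differentiating at $t=0$ (using that $t \mapsto A_t, b_t$ is smooth, or passing to the derivative of the defining algebraic relations), $\dot A_0 p_i + \dot b_0 + q_i = -q_i$, i.e. $q_i = -\tfrac12(\dot A_0 p_i + \dot b_0)$; since $A_t \in O(3)$ and $A_0 = \id$, $\dot A_0$ is skew-symmetric, so $x \mapsto -\tfrac12(\dot A_0 x + \dot b_0)$ is a Killing field of $\R^3$ and $q$ is trivial.

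The main obstacle I anticipate is the smoothness/compactness bookkeeping in the converse direction: a priori the isometries $g_t$ need not depend continuously on $t$, and one must rule out that a wild choice of $g_t$ hides a nontrivial $q$. The clean way around this is to argue algebraically rather than via a limit of isometries. The congruence of $P_t$ and $P_{-t}$ for all $t$ in an interval means that \emph{all} pairwise distances $\|p_i(t) - p_j(t)\|$ (not just those along edges) would have to match; but that is too strong in general, so instead one uses that congruence of $P_t$ and $P_{-t}$ for a one-parameter family forces the two analytic families $t \mapsto (\text{Gram matrix of } p_i(t))$ and $t \mapsto (\text{Gram matrix of } p_i(-t))$ to be related by a fixed orthogonal change of frame, and then extracts the first-order information. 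Alternatively, and most simply, one can invoke Pogorelov's own argument: if $P_t$ and $P_{-t}$ were congruent for all small $t$, then averaging the congruence $g_t$ against a suitable path and differentiating shows the infinitesimal deformation $q$ is, up to a Killing field, its own negative modulo higher order, forcing $q$ trivial. I would present the direct differentiation argument above as the primary route and remark that the converse is exactly the statement that non-triviality of $q$ is an \emph{open} condition along the deaveraged family, which is what ``deaveraging'' is designed to produce.
```
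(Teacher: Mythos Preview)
Your computation for the first assertion is exactly what the paper does: it says only that the equalities $\dist(p_i(t),p_j(t))=\dist(p_i(-t),p_j(-t))$ ``follow easily from \eqref{eqn:ScalProd}'', which is precisely your expansion showing the squared length is even in $t$. For the ``besides'' clause the paper gives no argument at all --- it simply refers to Pogorelov's book and notes that the idea transfers to the discrete case --- so your attempt there goes well beyond what the paper supplies.

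Your direction ``$q$ trivial $\Rightarrow$ $P_t\cong P_{-t}$'' via the polar decomposition of $\id\pm tS$ is correct (and cleaner than you make it look: since $(\id+tS)^\top(\id+tS)=(\id-tS)^\top(\id-tS)$, the two matrices have the same positive polar factor, hence differ by an orthogonal matrix, and the translation part is then forced). For the converse, besides the smoothness issue you already flag, there is one gap you do not mention: when you write $g_t(p_i+tq_i)=p_i-tq_i$ you are silently assuming the congruence $g_t$ respects the \emph{identity} vertex correspondence. A congruence of $P_t$ with $P_{-t}$ must send vertices to vertices, but a priori only up to a permutation; if $P$ has a nontrivial symmetry group this permutation need not be the identity. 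The standard fix is to pass to a subsequential limit $g_0$, which is a self-isometry of $P$ with some vertex permutation $\sigma$, replace $g_t$ by $g_0^{-1}g_t$, and then observe that for small $t$ the map $g_0^{-1}g_t$ sends $p_i(t)$ to the unique nearby vertex $p_i(-t)'$ of $g_0^{-1}(P_{-t})$; unwinding this shows $q$ differs from its pullback under the symmetry by a Killing field, and one concludes by decomposing $q$ into symmetry-isotypic pieces (or, more simply, by noting that once four affinely independent $p_i$ are fixed, the system $A_t p_i(t)+b_t=p_i(-t)$ determines $(A_t,b_t)$ rationally in $t$, so smoothness is automatic). None of this is deep, but it is what is actually needed to make your differentiation step legitimate.
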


To play safe, Lemma \ref{lem:Deave} should be stated for all sufficiently small $t$, as for a big $t$ the polyhedron $P_t$ might be self-intersecting.

The equalities
$$
\dist(p_i(t), p_j(t)) = \dist(p_i(-t), p_j(-t))
$$
for all edges $ij$ of $P$ follow easily from \eqref{eqn:ScalProd}. Pogorelov uses in \cite[Chapter 5]{Pog73} the inverse of Lemma \ref{lem:Deave} in the smooth case, but the idea transfers to the discrete situation easily.

Finally, Lemma \ref{lem:Deave} has a hyperbolic and a hyperbolic-de Sitter analogs, with
$$
p_i(t) = \exp_{p_i}(tq_i).
$$

\subsection{Existence of non-isometric pairs}
\label{subsec:NonIsom}
Using the hyperbolic version of Lemma \ref{lem:Deave}, we now construct three families of cone-manifolds. Let $P$ be a compact hyperbolic polyhedron with a non-trivial infinitesimal isometric deformation $q$, and let $P_t$ and $P_{-t}$ be polyhedra constructed in Lemma \ref{lem:Deave}. Denote
\begin{eqnarray}
P_t \cup_{\partial} P_t & = & M^1_t,\nonumber\\
P_t \cup_{\partial} P_{-t} & = & M^2_t,\label{eqn:ThreeFam}\\
P_{-t} \cup_{\partial} P_{-t} & = & M^3_t.\nonumber
\end{eqnarray}
Here $\cup_\partial$ means gluing of two polyhedra along their boundaries. Denote by $M^i_0$ the double $P \cup_\partial P$. The manifold $M^i_0$ does not depend on the choice of the index $i$ and can be viewed as a common element of the three families~\eqref{eqn:ThreeFam}.

Specifically, consider a symmetric twisted octahedron obtained from a Euclidean polyhedron in Example \ref{exl:TwistOcta} by placing it in the center of a Klein model. By taking Euclidean polyhedra of different widths and heights, we obtain a two-parameter family of hyperbolic twisted octahedra $P(a,b)$. Here $a$ stands e.g. for the edge length of the base, and $b$ for the distance between the bases. As in the previous paragraph, for each sufficiently small $t > 0$ we have cone-manifolds $M^i_t(a,b)$, $i = 1,2,3$.

\begin{prp}
\label{prp:SameAngles}
For every $\epsilon > 0$ and every pair of intervals $A, B \subset \R$ there exist $t_1, t_2 \in (0,\epsilon]$, $a_1, a_2 \in A$, and $b_1, b_2 \in B$ such that cone-manifolds $M_{t_1}^{i}(a_1,b_1)$ and $M_{t_2}^{j}(a_2,b_2)$, for some $i \ne j$, have same cone angles.
\end{prp}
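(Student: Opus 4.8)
The plan is to compare the cone angles of the manifolds in the three families $M^i_t(a,b)$ by studying how they vary with the parameters, and then to apply an elementary intermediate-value / dimension-counting argument.

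First I would fix notation for the dihedral angles. For a hyperbolic twisted octahedron $P(a,b)$ there are three combinatorial types of edges, as recalled in the discussion of Figure~\ref{fig:TruncOcta}; call the corresponding dihedral angles $\alpha(a,b), \beta(a,b), \gamma(a,b)$, depending continuously (in fact real-analytically) on $(a,b) \in A \times B$. When we perturb $P$ along the infinitesimal isometric deformation $q$ to get $P_t$, the dihedral angles change at first order, and by the key cancellation observed in Section~\ref{subsec:FlexDouble} (and made explicit in Lemma~\ref{lem:Deave}), the first-order variation of each dihedral angle of $P_{-t}$ is the negative of that of $P_t$. Thus, writing $\theta$ for any one of the three dihedral-angle functions, we have an expansion
\begin{equation}
\label{eqn:expansion}
\theta(P_{\pm t}) = \theta(P) \pm t\,\dot\theta + O(t^2),
\end{equation}
where $\dot\theta = \dot\theta(a,b)$ is the corresponding first-order variation. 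Since $q$ is non-trivial, $P_t$ and $P_{-t}$ are non-congruent (Lemma~\ref{lem:Deave}), so not all three of $\dot\alpha, \dot\beta, \dot\gamma$ vanish; after possibly relabelling, I may assume $\dot\alpha \not\equiv 0$ on $A\times B$, hence (by analyticity) $\dot\alpha(a,b)\neq 0$ for $(a,b)$ in a dense open subset.

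Next I translate the requirement ``$M^i_{t_1}(a_1,b_1)$ and $M^j_{t_2}(a_2,b_2)$ have the same cone angles'' into equations among the functions $\alpha,\beta,\gamma$. The cone angles of $M^i_t$ are exactly twice the dihedral angles of the relevant polyhedron: $M^1_t$ uses the angles of $P_t$ on all edges, $M^3_t$ uses those of $P_{-t}$, and $M^2_t$ uses $\theta(P_t)$ around the edges where the two copies are glued so that like-oriented faces meet and $\theta(P_{-t})$ elsewhere (one must check, using the symmetry of the twisted octahedron noted after Figure~\ref{fig:TruncOcta}, that such a consistent labelling of edge-classes exists; this is where the combinatorial bookkeeping lives). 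So I would pick a convenient pair, say $i=1$, $j=3$: then the condition becomes the system
\begin{equation}
\label{eqn:matching}
\theta\bigl(P_{t_1}(a_1,b_1)\bigr) = \theta\bigl(P_{-t_2}(a_2,b_2)\bigr)
\qquad\text{for }\theta\in\{\alpha,\beta,\gamma\}.
\end{equation}
This is three equations. Using the expansion~\eqref{eqn:expansion} and the fact that $(a,b)\mapsto(\alpha,\beta,\gamma)(a,b)$ has two-dimensional image, the idea is: the map $(t,a,b)\mapsto(\alpha,\beta,\gamma)(P_t(a,b))$ sends a three-parameter domain into $\R^3$, and the two ``ends'' $t>0$ and $t<0$ of this family, both restricted to small $|t|$, have overlapping images because for $t=0$ they coincide and for $t\neq 0$ they move off the common surface $\{(\alpha,\beta,\gamma)(a,b)\}$ in opposite normal directions (that's the content of $\dot\alpha\neq0$ together with transversality, which I would check by a Jacobian computation at a generic point). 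Hence a point in the image of the $t>0$ sheet that does not lie on the $t=0$ surface is also hit by the $t<0$ sheet, giving the desired coincidence with $i\neq j$; shrinking everything keeps $t_1,t_2\in(0,\epsilon]$ and $a_k,b_k$ in the prescribed intervals.

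The main obstacle, and the step I'd spend the most care on, is the transversality/non-degeneracy claim: I need to know that, near a generic $(a_0,b_0)$, the three functions $\alpha,\beta,\gamma$ of $(a,b)$ together with the first-order variation vector $(\dot\alpha,\dot\beta,\dot\gamma)$ span enough of $\R^3$ that the two sheets $t>0$ and $t<0$ genuinely overlap rather than merely touching along the $t=0$ locus. Equivalently, I want the $3\times3$ Jacobian of $(t,a,b)\mapsto(\alpha,\beta,\gamma)(P_t(a,b))$ to be nonsingular at some point with $t=0$; its last two columns are $\partial_a,\partial_b$ of the angle map and its first column is $(\dot\alpha,\dot\beta,\dot\gamma)$. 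I would verify this either by a direct (if tedious) computation in the Poincar\'e half-space model used in the proof of Lemma~\ref{lem:AngleEst}, where the three angles have explicit closed forms in terms of the size ratio and the screw data, or — cleaner — by appealing to the fact that the twisted octahedra are not globally rigid among octahedra with prescribed edge lengths, so the angle map cannot be locally injective in a way compatible with the two sheets being tangent. If the full-rank condition should fail identically (which I do not expect), one falls back on the weaker statement that the images of the two sheets still overlap by a degree or connectedness argument, since they agree on the codimension-one $t=0$ slice and are open maps on either side. Either way, once overlap of images is established, Proposition~\ref{prp:SameAngles} follows, and letting the two octahedra degenerate simultaneously toward the same limit gives the Gromov--Hausdorff closeness asserted in Theorem~\ref{thm:4}.
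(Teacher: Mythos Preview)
Your argument has a genuine gap at the crucial step. You restrict attention to the two families $M^1$ and $M^3$ and try to show that their angle-images overlap by verifying that the Jacobian of $(t,a,b)\mapsto(\alpha,\beta,\gamma)(P_t(a,b))$ is nonsingular at $t=0$. But that nonsingularity is exactly what \emph{prevents} the overlap you want: if the map is a local diffeomorphism near $t=0$ it is locally injective, so the images of $\{t>0\}$ and $\{t<0\}$ lie on \emph{opposite} sides of the image of the $t=0$ slice and meet only along it. Your matching system $\theta(P_{t_1}(a_1,b_1))=\theta(P_{-t_2}(a_2,b_2))$ then forces $(t_1,a_1,b_1)=(-t_2,a_2,b_2)$, impossible with $t_1,t_2>0$. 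The fallback ``degree/connectedness'' argument fails for the same reason: two $3$-balls glued along a common boundary $2$-disk embed in $\R^3$ with disjoint interiors. With only two of the three families there is simply no obstruction to injectivity of the angle map, so the pair $(i,j)=(1,3)$ was not a ``convenient'' choice but a fatal one.

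The paper's proof is shorter and purely topological, and its point is precisely that the \emph{third} family is indispensable. The full parameter space $\cM$ is $A\times B\times Y\approx\Disk^2\times Y$, where $Y$ is a triod (three copies of $[0,1]$ with the $0$'s identified), and the cone-angle map $\cM\to\R^3$ is continuous. Since $\Disk^2\times Y$ does not embed in $\R^3$ --- by invariance of domain the three open prong-interiors would be disjoint open subsets of $\R^3$ all accumulating on the embedded central $2$-disk, but that disk locally has only two sides --- the map is not injective, and the collision necessarily involves two distinct prongs. Your own first-order analysis already hints at why two prongs are not enough and why a Jacobian test at $t=0$ is inconclusive: $M^2_t=P_t\cup_\partial P_{-t}$ has cone angles $\theta(P_t)+\theta(P_{-t})=2\theta+O(t^2)$, so prong~$2$ leaves the $t=0$ surface only to second order.
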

\begin{proof}
Consider a space of cone-manifolds
$$
\cM = \{M_t^i(a,b)\ |\ t \in [0, \epsilon], a \in A, b \in B, i \in \{1,2,3\}\}.
$$
We have
\begin{equation}
\label{eqn:Triod}
\cM \approx A \times B \times Y \approx \Disk^2 \times Y,
\end{equation}
where $Y$ is a ``triod'', the topological space homeomorphic to the union of three copies of $[0,1]$ with all exemplars of $0$ identified.

Due to the symmetry of $P(a,b)$, the singular segments of each of the cone-manifolds $M_t^i(a,b)$ split into three groups according to the values of cone angles around them. Thus we have a continuous map
$$
\cM \to \R^3,
$$
and we have to show that this map is not injective. But it cannot be injective due to \eqref{eqn:Triod}. Proposition is proved.
\end{proof}

By deaveraging an infinitesimally flexible hyperideal polyhedron, one can prove existence of non-isometric pairs of cone-manifolds without vertices and with same cone angles. By taking a suitable branched cover as in Section \ref{subsec:LargeAngles}, all cone angles can be made larger than $2\pi$.

% \bibliographystyle{alpha}
% \bibliography{FlexCone.bib}

\end{document}